\newcommand\R{{\mathbb{R}}}
\renewcommand\P{{\mathbf{P}}}
\newcommand\E{{\mathbf{E}}}
\newcommand\eps{{\varepsilon}}
\newcommand\tr{\operatorname{trace}}
\theoremstyle{plain}
  \newtheorem{theorem}{Theorem}[section]
  \newtheorem{conjecture}[theorem]{Conjecture}
  \newtheorem{proposition}[theorem]{Proposition}
  \newtheorem{lemma}[theorem]{Lemma}
\theoremstyle{remark}
  \newtheorem{remark}[theorem]{Remark}
\theoremstyle{definition}
  \newtheorem{definition}[theorem]{Definition}
\begin{document}

\title[Necessity of four moments]{Random matrices: Localization of the eigenvalues and the necessity of four moments}

\author{Terence Tao}
\address{Department of Mathematics, UCLA, Los Angeles CA 90095-1555}
\email{tao@math.ucla.edu}
\thanks{T. Tao is is supported by a grant from the MacArthur Foundation, by NSF grant DMS-0649473, and by the NSF Waterman award.}

\author{Van Vu}
\address{Department of Mathematics, Rutgers, Piscataway, NJ 08854}
\email{vanvu@math.rutgers.edu}
\thanks{V. Vu is supported by research grants DMS-0901216 and AFOSAR-FA-9550-09-1-0167.}

\begin{abstract}  Consider the eigenvalues $\lambda_i(M_n)$  (in increasing order) of a random Hermitian matrix $M_n$ whose upper-triangular entries are independent with mean zero and variance one, and are exponentially decaying.  By Wigner's semicircular law, one expects that $\lambda_i(M_n)$ concentrates around $\gamma_i \sqrt n$, where $\int_{-\infty}^{\gamma_i} \rho_{sc} (x) dx = \frac{i}{n}$ and $\rho_{sc}$ is the semicircular function. 

In this paper, we show that if the entries have vanishing third moment, then   for all $1\le i \le n$

$$\E |\lambda_i(M_n)-\sqrt{n} \gamma_i|^2  = O( \min( n^{-c} \min(i,n+1-i)^{-2/3} n^{2/3}, n^{1/3+\eps}  ) ) ,$$  for some  absolute constant 
$c>0$ and any  absolute constant $\eps>0$. In particular, for the eigenvalues in the bulk ($\min \{i, n-i\}=\Theta (n)$), 
$$\E |\lambda_i(M_n)-\sqrt{n} \gamma_i|^2  = O( n^{-c}). $$

\noindent  A similar result is achieved for the rate of convergence. 

As a corollary,  we show that  the four moment condition in the Four Moment Theorem 
is necessary, in the sense that if one allows the fourth moment to change (while keeping the first three moments fixed), then the \emph{mean} of $\lambda_i(M_n)$ changes by an amount comparable to $n^{-1/2}$ on the average.  We make a precise conjecture about how the expectation of the eigenvalues vary with the fourth moment. \end{abstract}

\maketitle

\setcounter{tocdepth}{2}

\noindent {\it Key words: Random matrices, localization, rate of convergence, four moment theorem .} 

\section{Introduction}

This note is concerned with the local eigenvalue statistics of the following random matrix model.

\begin{definition}[Wigner matrices]\label{herm-def}  A \emph{Wigner matrix} is a random hermitian matrix $M_n = (\zeta_{ij})_{1 \leq i, j \leq n}$ such that
\begin{itemize}
\item The $\zeta_{ij}$ for $1 \leq i \leq j \leq n$ are independent with mean zero and variance one, and $\zeta_{ji} = \overline{\zeta_{ij}}$;
\item For $1 \leq i < j \leq n$, $\zeta_{ij}$ are identically distributed, with the real and imaginary parts of $\zeta_{ij}$ being independent and identically distributed with distribution $\eta$;
\item For $1 \leq i \leq n$, the $\zeta_{ii}$ are identically distributed with distribution $\tilde \eta$;
\item  (Uniform exponential decay) There exist constants $C, C' > 0$ such that
\begin{equation}\label{ued}
\P( |\zeta_{ij}| \ge t^C) \le \exp(- t)
\end{equation}
for all $t \ge C'$ and $1 \leq i,j \leq n$.
\end{itemize}
We refer to $\eta,\tilde \eta$ as the \emph{atom distributions} of $M_n$.
\end{definition}

A classical example of a Wigner matrix is the \emph{Gaussian unitary ensemble} (GUE), in which $\eta$ and $\tilde \eta$ being the normal distribution with mean zero and variances $1/2$, $1$ respectively. 

 \begin{remark} Wigner's matrices are not the most general random matrix model for which the results here are applicable, but we restrict to this case for simplicity. The results, for example, hold for real matrices, in particular Bernoulli matrices. \end{remark} 

A  Wigner matrix $M_n$ has $n$ real eigenvalues
$$ \lambda_1(M_n) \leq \ldots \leq \lambda_n(M_n).$$

The {\it global} distribution of these eigenvalues has been known since the 1950s, and is  described by 
  the famous \emph{Wigner semicircular law}, which asserts that the empirical spectral measure
$$ \frac{1}{n} \sum_{i=1}^n \delta_{\frac{1}{\sqrt n} \lambda_i(M_n) }$$
converges almost surely (in the vague topology) to the semicircular distribution $\rho_{sc}(x)\ dx$, where
$$ \rho_{sc}(x) := \frac{1}{\pi} (4-x^2)_{+}^{1/2}.$$

It is of interest to understand the distribution of individual eigenvalues $\lambda_i(M_n)$.  If for each $1 \leq i \leq n$ we define the \emph{classical location} $\gamma_i$ of the normalised $i^{th}$ eigenvalue by the formula
\begin{equation}\label{gammai}
\int_{-\infty}^{\gamma_i} \rho_{sc} (x) dx = \frac{i}{n}. 
\end{equation}
then the Wigner semicircular law (combined with an almost sure bound of $(2+o(1))\sqrt{n}$ for the operator norm of $M_n$, due to Bai and Yin\cite{baiyin}) is equivalent to the assertion that one has
\begin{equation}\label{gammai-form}
\lambda_i(M_n) = \gamma_i \sqrt{n} + o(\sqrt{n})
\end{equation}
uniformly for $1 \leq i \leq n$, almost surely as $n \to \infty$.  If we ignore the $o(\sqrt{n})$ error in \eqref{gammai-form}, we are thus led to the heuristic
\begin{equation}\label{approx}
\lambda_{i+1}(M_n)-\lambda_i(M_n) \approx \min(i,n-i)^{-1/3} n^{-1/6}
\end{equation}
for the $i^{th}$ eigenvalue spacing.  In particular, this spacing should be comparable to $n^{-1/2}$ in the bulk region $\delta n \leq i \leq (1-\delta) n$ (for any fixed $\delta>0$), and as large as $n^{-1/6}$ at the edge of the spectrum.  Note though that this derivation is non-rigorous as the $o(\sqrt{n})$ error could be much larger than the expected gap size \eqref{approx}.

In the last few years, there has been much progress in formalising the above heuristics and obtaining more precise control on the distribution of the eigenvalues and their spacings: see e.g. \cite{Sos1}, \cite{ERSY2}, \cite{TVbulk}, \cite{ERSTVY}, \cite{TVedge}, \cite{Joh2}.  A recent survey of these topics can be found in \cite{guionnet}.

It is natural to ask whether the $o(\sqrt{n})$ error in \eqref{gammai-form} can be improved.  In \cite{Vu}, the Talagrand concentration inequality was used to establish (among other things\footnote{Strictly speaking, the results in \cite{Vu}, \cite{Meckes} only establish the bounds \eqref{vu}, \eqref{mox} implicitly, and require in addition that the matrix entries are uniformly bounded, rather than exponentially decaying.  However, the arguments in these papers can be easily extended to the exponentially decaying case, after a standard truncation argument to reduce to the case when the entries are bounded in magnitude by $n^{\eps/2}$ (say), and replacing the median with a slightly shifted variant.  We omit the details.}) that
\begin{equation}\label{vu}
\lambda_i(M_n) = {\bf M} \lambda_i(M_n) + O( n^\eps \min( i, n+1-i ) )
\end{equation}
with probability at least $1-O_{A,\eps}(n^{-A})$ (say) for any $A, \eps > 0$, where ${\bf M} \lambda_i(M_n)$ is the median of $\lambda_i(M_n)$; see also \cite{GZ} for closely related results.  This was improved in \cite{Meckes} to
\begin{equation}\label{mox}
\lambda_i(M_n) = {\bf M} \lambda_i(M_n) + O( n^\eps \min( i, n+1-i )^{1/2} )
\end{equation}
with the same probability of $1-O_{A,\eps}(n^{-A})$.  In the bulk region $\delta n < i < (1-\delta)n$, the concentration of measure arguments in \cite{GZ} give the bound 
\begin{equation}\label{lamn}
 \lambda_i(M_n) = \sqrt{n} \gamma_i + O( n^{1/2+\eps} \min(i,n+1-i)^{-1/3} n^{-1/6} ) 
\end{equation}
with probability $1-O_{A,\eps,\delta}(n^{-A})$ whenever $\min(i,n+1-i) > n^{1/2+\eps}$ (see Section \ref{local-proof} for further discussion of this bound).

In all the above estimates, the error term is larger than $1$.  In the recent paper \cite[Theorem 7.1]{EYY}, the bound
\begin{equation}\label{logsob}
\sum_{i=1}^n \E |\lambda_i(M_n) - \gamma_i \sqrt{n}|^2 = O( n^{1-c} )
\end{equation}
was established for some absolute constant $c>0$. (See also the earlier result in \cite[Theorem 6.3]{EYY0}, which established \eqref{logsob} under an additional log-Sobolev hypothesis on the distribution.)  In the bulk region $\delta n < i < (1-\delta) n$, a significantly stronger localisation was obtained in \cite{EYY} (see the equation preceding $(7.8)$ in that paper), namely that $\lambda_i(M_n) = \gamma_i \sqrt{n} + O( n^{-1/2+\eps} )$ with probability $O_{\eps,A,\delta}(n^{-A})$ for any $\eps, A$, with variants of this result also being obtained closer to the edge.  This result was established via a strong bound on the convergence of the Stieltjes transform, which in turn was obtained by a lengthy moment method computation.

Our first main result gives an alternate method to establish eigenvalue localisation, based on the \emph{three moment theorem} rather than on combining the Stieltjes transform method with the moment method, and which gives a non-averaged version \eqref{logsob} (in the case when the third moment vanishes):

\begin{theorem}[Localisation]\label{stronglocal}  
There is an absolute constant $c>0$ such that the following holds for any constant $\eps >0$. 
Let  $M_n$ be  a Wigner matrix whose atom distribution $\eta$ has vanishing third moment $\E \eta^3 = 0$.
Then for all $1\le i \le n$, 
\begin{equation}\label{lamin}
 \E |\lambda_i(M_n)-\sqrt{n} \gamma_i|^2  = O( \min( n^{-c} \min(i,n+1-i)^{-2/3} n^{2/3}, n^{1/3+\eps}  ) ).
\end{equation}
\end{theorem}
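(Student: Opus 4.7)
The plan is to compare $M_n$ to GUE via the Tao--Vu three moment theorem. The vanishing third moment hypothesis $\E \eta^3 = 0$ is precisely what is needed: together with mean zero and unit variance, the atom distribution of $M_n$ then matches the GUE atom distribution in the first three moments, which is exactly the input required for three-moment-type comparison. For GUE $G_n$ itself the claimed bound is already known and is in fact much stronger. Rigidity results of Erd\H{o}s--Yau--Yin give, with overwhelming probability,
\[
|\lambda_i(G_n) - \sqrt{n}\gamma_i| \leq n^\eps \min(i,n+1-i)^{-1/3} n^{-1/6},
\]
and combined with $\|G_n\|_{op} = O(\sqrt{n})$ with overwhelming probability this yields $\E|\lambda_i(G_n) - \sqrt{n}\gamma_i|^2 = O(n^{2\eps-1/3} \min(i,n+1-i)^{-2/3})$, which is comfortably within the target.

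Next I would set up a truncated comparison. Fix a small parameter $c_0 > 0$ and let $K := n^{c_0}$. Let $\chi_K$ be a smooth cutoff with $\chi_K(x) = 1$ for $|x| \leq K$, $\chi_K(x) = 0$ for $|x| \geq 2K$, and $\|\chi_K^{(j)}\|_\infty = O(K^{-j})$. Apply the three moment theorem --- the bulk version from \cite{TVbulk} for eigenvalues away from the edge and the edge version from \cite{TVedge} for those close to it --- to the test function
\[
F(x) := (x - \sqrt{n}\gamma_i)^2 \, \chi_K(x - \sqrt{n}\gamma_i),
\]
whose first several derivatives are uniformly bounded by $O(K^2) = O(n^{2c_0})$. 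This produces
\[
\bigl|\E F(\lambda_i(M_n)) - \E F(\lambda_i(G_n))\bigr| = O(n^{-c + C c_0}),
\]
which is $O(n^{-c/2})$ once $c_0$ is chosen small enough relative to the absolute constant $c$.

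To close the argument I would control the tail discrepancies $\E[(\lambda_i - \sqrt{n}\gamma_i)^2 (1 - \chi_K)]$ for both $M_n$ and $G_n$, using the a priori concentration bounds \eqref{mox} and \eqref{lamn} --- which already give control at the scale $n^\eps$ or better with overwhelming probability --- together with the deterministic operator-norm bound $\|M_n\|_{op} = O(\sqrt{n})$ with overwhelming probability to absorb the rare bad events. Assembling the pieces,
\[
\E|\lambda_i(M_n) - \sqrt{n}\gamma_i|^2 \leq \E|\lambda_i(G_n) - \sqrt{n}\gamma_i|^2 + O(n^{-c/2}) + (\text{tail}),
\]
and substituting the GUE bound yields the claimed estimate.

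The main obstacle is the handling of the edge regime. Here the bulk three moment theorem is no longer applicable, so one must appeal to the edge three moment theorem of \cite{TVedge} and arrange the cutoff scale $K$ to match the $n^{-1/6}$ size of the edge gaps; moreover, the a priori bound \eqref{lamn} degenerates past $\min(i,n+1-i) \lesssim n^{1/2}$, which is what forces the weaker second entry $n^{1/3+\eps}$ in the $\min$. A secondary, routine point is that the three moment theorem is typically stated for the normalised eigenvalues $\lambda_i/\sqrt{n}$, so some careful rescaling is needed to translate it into a statement about $\lambda_i - \sqrt{n}\gamma_i$ at the correct scale.
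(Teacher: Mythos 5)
Your overall strategy---compare to GUE via the three moment theorem applied to a truncated quadratic test function, using a priori concentration to control the truncation---is exactly the paper's strategy, but the central scaling argument does not close as written, and the cutoff is at the wrong scale.

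First, the claimed error $|\E F(\lambda_i(M_n)) - \E F(\lambda_i(G_n))| = O(n^{-c+Cc_0})$ does not come out of the three moment theorem. As stated in the paper (Theorem \ref{theorem:main2}, second part), the theorem takes a test function $G$ applied to $\sqrt{n}\lambda_i$ satisfying $|G^{(j)}(x)| \leq n^{-Cjc_0}$ for $0\le j\le 5$ (in particular $|G|\le 1$), and returns $|\E G(\sqrt{n}\lambda_i) - \E G(\sqrt{n}\lambda_i')| \le n^{-c_0}$: there is only one small constant $c_0$ controlling both the hypothesis and the error, not a separate $c$ that you can afford to beat by shrinking $c_0$. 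Your $F$ has sup norm $O(K^2)$, so after the change of variables $G(y) := F(y/\sqrt{n})$ you must normalise by $K^2$ before the theorem applies. The resulting comparison error on $\E F$ is $O(K^2 n^{-c_0})$. With your choice $K = n^{c_0}$ this is $O(n^{c_0})$, which is \emph{larger than} $1$ and vastly larger than the $O(n^{-c})$ target in the bulk. So the inequality ``$\E|\lambda_i(M_n)-\sqrt{n}\gamma_i|^2 \le \E|\lambda_i(G_n)-\sqrt{n}\gamma_i|^2 + O(n^{-c/2}) + \text{tail}$'' does not hold under your set-up.

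Second, and related: your cutoff scale $K$ must be adapted to $i$, not fixed at $n^{c_0}$. The a priori bound \eqref{lamn} localises $\lambda_i$ at scale $R \asymp n^{1/2+\eps}\min(i,n+1-i)^{-1/3}n^{-1/6}$, and at the boundary of the bulk region ($\min(i,n+1-i) \asymp n^{1/2+\eps}$) this $R$ is of size $n^{1/6+O(\eps)}$. A cutoff at the fixed scale $n^{c_0}$ with $c_0$ small will not contain $\lambda_i$ there, so the tail term is not negligible. The paper takes the cutoff at scale $R$ in the $\lambda_i$ variable (equivalently $\sqrt{n}R$ in the $\sqrt{n}\lambda_i$ variable), so the truncation catches $\lambda_i$ with probability $1-O(n^{-100})$ via Lemma \ref{slick-2}, and $\sqrt{n}R \ge \sqrt{n}$ guarantees $|G^{(j)}| \le (\sqrt{n}R)^{-j} \le n^{-j/2} \le n^{-Cjc_0}$ for $c_0$ small.

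Third, this changes the bookkeeping conceptually. With the $i$-adapted scale the comparison error is $O(R^2 n^{-c_0})$, and this---not the GUE concentration bound---is the dominant contribution to the final estimate: $\E|\lambda_i(M_n)-\sqrt{n}\gamma_i|^2 = \E|\lambda_i(G_n)-\sqrt{n}\gamma_i|^2 + O(n^{-c_0}R^2) + O(n^{-10})$, and $R^2 n^{-c_0} \asymp n^{2/3+2\eps-c_0}\min(i,n+1-i)^{-2/3}$ is exactly the first entry of the $\min$ in \eqref{lamin}, while the GUE term $O(n^{2\eps}\min(\cdot)^{-2/3}n^{-1/3})$ is smaller by a full power of $n$. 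Your write-up treats the comparison error as a lower-order correction to the GUE bound, which inverts the actual structure. Your instinct about the edge regime and about the second entry $n^{1/3+\eps}$ of the $\min$ is right in spirit; the paper handles it by treating $i=1,n$ via Tracy--Widom plus \eqref{mox}, and then reducing the range $1<i<n^{1/2+\eps}$ to the bulk case by a deterministic triangle inequality using $\sqrt{n}\gamma_1 - \sqrt{n}\gamma_{i_0} = O(n^{1/6+\eps/3})$.
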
 

One can set $c$ to be $1/1000$ (say) and we make no attempt to optimize this constant. 
 By summing over $i$, one obtains  \eqref{logsob}. 
 Furthermore, in the bulk region  $\delta n \leq i \leq (1-\delta) n$, the theorem implies 
$$\E |\lambda_i(M_n)-\sqrt{n} \gamma_i|^2  = O_\delta(n^{-c}). $$
This is not as strong as the recent localisation result obtained in \cite{EYY}, but the proof is shorter (assuming the three moment theorem) and will suffice for our applications.  In view of \eqref{approx}, the optimal bound on the right-hand side of \eqref{lamin} should be $O( \min(i,n-i)^{-2/3} n^{-1/3+\eps} )$.

Let $N_I$ be the number of eigenvalues (of $\frac{1}{\sqrt n} M_n$) in $I$, and define $F_n(x) = \frac{1}{n} \E N_{[-2,x]} $. 
The quantity $\Delta:= \sup_x  |F_n(x) -\int_{-\infty}^ x \rho _{sc}(t) dt | $ is of interest and has been investigated
by many researchers (see  \cite[Chapter 8]{BS}, \cite{GT} and the references therein). In these papers, it has been shown that 
$\Delta  = O( n^{-1/2})$ under various assumptions (the most general one seems to be in \cite{GT} which only requires bounded fourth moment). 

The arguments in the proof of Theorem \ref{stronglocal} can be used to break the $n^{-1/2}$ barrier, under the extra third moment condition:

\begin{theorem} \label{rateofconvergence} 
There is an absolute constant $c>0$ such that the following holds. 
Let  $M_n$ be  a Wigner matrix whose atom distribution $\eta$ has vanishing third moment $\E \eta^3 = 0$.
Then $\Delta = O(n^{-1/2 -c} )$. 
\end{theorem}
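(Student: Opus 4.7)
The plan is to deduce Theorem~\ref{rateofconvergence} from the pointwise localization bound in Theorem~\ref{stronglocal} via a second-moment (Chebyshev) argument applied to each eigenvalue individually. Set $G(x) := \int_{-\infty}^x \rho_{sc}(t)\,dt$, and for $x \in [-2,2]$ let $k = k(x) := \#\{i : \gamma_i \le x\}$; the defining relation \eqref{gammai} gives $|nG(x) - k| \le 1$. Since $F_n(x) - G(x) = \frac{1}{n}(\E N_{[-2,x]} - nG(x))$, it is enough to show that $|\E N_{[-2,x]} - k(x)| = O(n^{1/2 - c/2})$ uniformly in $x$; then $\Delta = O(n^{-1/2 - c/2})$ follows, the constant $c$ in Theorem~\ref{rateofconvergence} being taken to be half of the $c$ appearing in Theorem~\ref{stronglocal}.

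The starting point is the identity
$$\E N_{[-2,x]} - k = \sum_{i > k}\P(\lambda_i \le x\sqrt n) \;-\; \sum_{i \le k}\P(\lambda_i > x\sqrt n),$$
to each term of which I apply Chebyshev:
$$\P(\lambda_i \text{ on the wrong side of } x\sqrt n) \le \frac{\E|\lambda_i - \sqrt n\gamma_i|^2}{n(\gamma_i - x)^2}.$$
Into the numerator I feed the bound of Theorem~\ref{stronglocal}, and for the denominator I use the classical spacing, which in the bulk satisfies $|\gamma_i - x| \gtrsim m / (n\rho_{sc}(\gamma_k))$ for $m := |i - k|$, with the finer scaling $\gamma_{n-j+1} - \gamma_{n-j} \asymp j^{-1/3} n^{-2/3}$ coming into play near the edge. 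A direct calculation shows that the resulting Chebyshev estimate collapses in every regime to $\P \le O(n^{1-c}/m^2)$. I would then split the two sums at $m = M := \lceil n^{(1-c)/2} \rceil$, using the trivial bound $1$ for $m \le M$ and the Chebyshev bound for $m > M$ (whose tail sums to $O(n^{1-c}/M)$); the two contributions balance at $O(n^{(1-c)/2})$, yielding $|\E N_{[-2,x]} - k| = O(n^{(1-c)/2})$. For $x$ outside $[-2,2]$ the same Chebyshev estimate plus the almost-sure Bai--Yin bound on the operator norm handle the (only very small) sums that remain.

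The main obstacle is verifying the Chebyshev estimate $\P \le O(n^{1-c}/m^2)$ \emph{uniformly} across the edge regime, where both the localization bound from Theorem~\ref{stronglocal} and the spacing $|\gamma_i - x|$ individually degenerate. The saving grace is that they degenerate at matching rates: if $x = \gamma_{n-j_0}$ is near the upper edge and $i = n - j$, then $\E|\lambda_i - \sqrt n\gamma_i|^2 \lesssim n^{-c} j^{-2/3} n^{2/3}$, while $n(\gamma_i - x)^2$ works out to be of order $n^{-1/3} m^2 j_0^{-2/3}$ for $m \ll j_0$ and of order $n^{-1/3} j^{4/3}$ for $m$ comparable to or exceeding $j_0$; in each sub-case the ratio simplifies to $O(n^{1-c}/m^2)$. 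A handful of boundary cases (such as $j_0 \le M$, or $|x|$ marginally above $2$) are absorbed by the trivial bound together with Bai--Yin. Beyond Theorem~\ref{stronglocal} itself, no machinery beyond Chebyshev's inequality, the known semicircle density asymptotics, and the Bai--Yin bound is required.
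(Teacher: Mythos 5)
Your plan --- deducing Theorem~\ref{rateofconvergence} from Theorem~\ref{stronglocal} by summing Chebyshev tail bounds over all eigenvalue indices --- is genuinely different from the paper's proof, which instead fixes a window $[i_x - R, i_x + R]$ of $O(n^{1/2+\eps})$ indices around the expected rank, applies the three moment theorem directly to compare $\P(\lambda_j/\sqrt{n} \le x)$ with its GUE counterpart, and closes with the sharp GUE concentration of Lemma~\ref{slick}. Both routes ultimately rest on the three moment theorem; yours treats Theorem~\ref{stronglocal} as a black box, which is a legitimate and more modular alternative.

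However, the key uniform estimate you announce --- that the Chebyshev bound ``collapses in every regime to $\P \le O(n^{1-c}/m^2)$'' --- is false, and the very sub-case you flagged as the main obstacle is where the computation slips. If $x = \gamma_{n-j_0}$ and $i = n-j$ with $j \ll j_0$ (so $m = j_0 - j \asymp j_0$), then $|\gamma_i - x| \asymp j_0^{2/3} n^{-2/3}$, hence $n(\gamma_i - x)^2 \asymp n^{-1/3} j_0^{4/3}$, not $n^{-1/3} j^{4/3}$ as you wrote; combined with the numerator $\lesssim n^{-c} j^{-2/3} n^{2/3}$ this gives a Chebyshev bound $\lesssim n^{1-c} j^{-2/3} j_0^{-4/3}$, which exceeds $n^{1-c}/m^2 \asymp n^{1-c}/j_0^{2}$ by a factor $(j_0/j)^{2/3}$. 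The same breakdown happens whenever $i$ sits near the far edge relative to $x$: for $x$ in the bulk and $i$ within $O(n^{1/2})$ of the edge, Chebyshev with the $n^{1/3+\eps}$ branch of Theorem~\ref{stronglocal} gives $\approx n^{-2/3+\eps}$ rather than the $\approx n^{-1-c}$ your formula predicts. The approach can still be pushed through because these offending indices are few and their (correct) contributions remain summable --- e.g.\ $\sum_{j < j_0/2} n^{1-c} j^{-2/3} j_0^{-4/3} \asymp n^{1-c}/j_0 = O(n^{(1-c)/2})$ when $j_0 \ge M$, and when $j_0 < M$ they already have $m < M$ and are absorbed by the trivial bound, with an analogous accounting for the far-edge terms. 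But as written the proposal asserts a uniform estimate that does not hold, so the split at $m = M$ followed by $\sum_{m>M} n^{1-c}/m^2 = O(n^{1-c}/M)$ is not justified without this additional case analysis.
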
 

We prove this theorem in Section \ref{rate-sec}. The bound $n^{-1/2 -c}$ can be improved to $n^{-1 +\eps}$ if we use median instead of expectation in the definition of $F_n$ (see Remark \ref{remark:median}).   A related result was proven (using different methods) recently in \cite[Theorem 6.3]{EYY}, namely that
$$ \frac{1}{n} N_{[-2,x]} = \int_{-\infty}^x \rho_{sc}(t)\ dt + O( n^{-1/2-c} / |x-2| )$$
with probability $1-O(n^{-A})$ for any fixed $A$, without a third moment hypothesis.

Next, we give an application of Theorem \ref{stronglocal} to demonstrate the sharpness (in some sense) of the \emph{four moment theorem}, introduced by the authors in \cite{TVbulk, TVedge} in order to study the distribution
 of eigenvalues of random matrices.  We state a special case of this theorem here:
 
\begin{theorem}[Four Moment Theorem]\label{theorem:main2} For all sufficiently small $c_0 > 0$ the following holds.   Let $M_n$, $M'_n$ be two Wigner random matrices whose atom distributions $\eta, \eta'$ have matching moments to fourth order, thus $\E \eta^j = \E (\eta')^j$ for $j=3,4$.  Let $G: \R \to \R$ be a smooth function obeying the derivative bounds
\begin{equation}\label{G-deriv}
|G^{(j)}(x)| \leq n^{c_0}
\end{equation}
for all $0 \leq j \leq 5$ and $x \in \R$.   We abbreviate $\lambda_i := \lambda_i(M_n)$ and $\lambda'_i := \lambda_i(M'_n)$.

Then for $n$ sufficiently large (depending on $c_0$ and the constants $C,C'$ in \eqref{ued}) and all $1 \leq i \leq n$ one has
\begin{equation}\label{equation:approximation}
|\E  G(\sqrt{n} \lambda_{i} )
-  \E  G(\sqrt{n} \lambda'_{i})| \le n^{-c_0}.
\end{equation}
If the atom distributions $\eta, \eta'$ have matching moments only to order $3$ rather than $4$ (i.e. $\E \eta^3 = \E (\eta')^3$), then \eqref{equation:approximation} still holds provided that one strengthens \eqref{G-deriv} to
\begin{equation}\label{g2}
|G^{(j)}(x)| \leq n^{-C j c_0}
\end{equation}
for all $0 \leq j \leq 5$ and $x \in \R$, and some absolute constant $C$.
\end{theorem}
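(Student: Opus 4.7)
The plan is a Lindeberg-type swapping argument combined with a perturbative expansion for the eigenvalues. Enumerate the $O(n^2)$ independent entries of $M_n$ and, for $0 \le k \le K := n(n+1)/2$, let $M^{(k)}$ be the matrix obtained from $M_n$ by replacing the first $k$ entries (and their Hermitian conjugates) with the corresponding entries of $M'_n$, so that $M^{(0)} = M_n$ and $M^{(K)} = M'_n$. By telescoping, it suffices to show that each single-swap difference
$$|\E G(\sqrt{n}\lambda_i(M^{(k-1)})) - \E G(\sqrt{n}\lambda_i(M^{(k)}))|$$
is at most $n^{-2-c_0 - o(1)}$, so that summing over $k$ yields the claimed $n^{-c_0}$ bound.

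For a single swap, write $M^{(k-1)} = N + \zeta V$ and $M^{(k)} = N + \zeta' V$, where $N$ has the swapped entries zeroed out, $V$ is a fixed Hermitian matrix of norm $O(1)$ supported on those positions, and $\zeta,\zeta'$ are independent of $N$ with the hypothesized matching moments. Conditioning on $N$ and viewing $F(t) := G(\sqrt{n}\,\lambda_i(N + tV))$ as a smooth function of the scalar $t \in \R$, a fifth-order Taylor expansion yields
$$F(\zeta) - F(\zeta') = \sum_{j=1}^4 \frac{F^{(j)}(0)}{j!}\bigl(\zeta^j - (\zeta')^j\bigr) + R_5,$$
with $|R_5| \lesssim \sup_{|s|\le \max(|\zeta|,|\zeta'|)} |F^{(5)}(s)| \cdot (|\zeta|^5 + |\zeta'|^5)$. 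Taking expectations, matching of the first four moments forces the polynomial terms to cancel, so only $\E R_5$ survives. The moments $\E|\zeta|^5, \E|\zeta'|^5$ are $O(1)$ by the exponential decay \eqref{ued}, after a routine truncation restricting $|\zeta|, |\zeta'| \le n^{o(1)}$.

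The central analytic task is then to bound $F^{(j)}(t)$ for $j \le 5$ uniformly in $|t| \le n^{o(1)}$. By the Hadamard variation formulas,
$$\frac{d\lambda_i}{dt} = u_i^*\, V\, u_i, \qquad \frac{d^2\lambda_i}{dt^2} = 2\sum_{j\neq i} \frac{|u_j^*\, V\, u_i|^2}{\lambda_i - \lambda_j},$$
and higher derivatives of $\lambda_i$ (and then of $F$, via chain and product rules) are multilinear sums of products of eigenvector inner products $u_a^* V u_b$ divided by products of eigenvalue gaps $\lambda_i - \lambda_j$. The needed control rests on two pillars: \emph{eigenvector delocalization}, $\|u_i\|_\infty \le n^{-1/2 + o(1)}$ (so each factor $|u_a^* V u_b|$ is $O(n^{-1 + o(1)})$), and \emph{level repulsion}, giving $|\lambda_{i\pm 1} - \lambda_i| \ge n^{-1/2 - o(1)}$ and more generally $|\lambda_j - \lambda_i| \gtrsim |j-i|/\sqrt{n}$, with overwhelming probability and under small perturbations of a single entry. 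On the good event $\CE$ where both hold, one obtains $|F^{(j)}(t)| \le n^{j/2 + O(c_0)}$, so the fifth-order remainder is $O(n^{-5/2 + O(c_0)})$ per swap, summing to $O(n^{-1/2 + O(c_0)}) \le n^{-c_0}$ for $c_0$ small. The complementary event has probability $O(n^{-A})$ and, using the crude bound $|F| \le n^{O(c_0)}$ from \eqref{G-deriv}, contributes a negligible $O(n^{c_0 - A})$.

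The principal obstacle is the level-repulsion estimate with strong polynomial tails, since individual gaps can be as small as $n^{-3/2}$ and would otherwise blow up the denominators in the Hadamard formulas; one must show that for the specific index $i$, none of the relevant gaps is pathologically small, and that this persists when a single entry is resampled. These bounds are themselves obtained by an auxiliary swapping argument comparing to the explicitly solvable GUE, which is in fact the main technical novelty of the four moment theorem machinery. In the three-moment case the Taylor term $j=4$ no longer averages to zero and must be bounded on its own; the strengthened hypothesis \eqref{g2} provides a $n^{-4Cc_0}$ decay factor in $|F^{(4)}|$ through the chain rule, balancing the missing $\zeta^4$ cancellation against the fifth-order remainder.
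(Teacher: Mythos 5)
The paper does not actually prove Theorem~\ref{theorem:main2}; the ``proof'' is a two-line citation to \cite{TVbulk} (bulk) and \cite{TVedge} (edge). Your outline is a reasonably faithful summary of the Lindeberg swapping strategy used in those references: telescoping over $O(n^2)$ single-entry swaps, fifth-order Taylor expansion in the swapped variable so that matching moments kill the first four terms, Hadamard variation formulas to express derivatives of $\lambda_i(N+tV)$ in terms of eigenvector overlaps and gap denominators, and eigenvector delocalization plus a gap lower bound to control those. The accounting also works: a per-swap remainder of $O(n^{-5/2+O(c_0)})$ times $O(n^2)$ swaps gives $O(n^{-1/2+O(c_0)})$, which beats $n^{-c_0}$ for small $c_0$.

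Two small things to tighten. First, there is a sign slip: you write $|F^{(j)}(t)| \le n^{j/2+O(c_0)}$ but then (correctly) claim a fifth-order remainder of size $n^{-5/2+O(c_0)}$. Under delocalization each factor $u_a^*Vu_b$ is $O(n^{-1+o(1)})$ and each denominator $1/(\lambda_i-\lambda_j)$ is $O(n^{1/2+o(1)})$, so $\lambda_i^{(k)} = O(n^{-(k+1)/2+o(1)})$ and, after the chain rule with $|G^{(m)}|\le n^{c_0}$, one gets $|F^{(j)}(t)| = O(n^{-j/2+O(c_0)})$; your subsequent conclusion is consistent only with this corrected bound. Second, the way the gap/level-repulsion input is obtained in \cite{TVbulk} is not primarily by swapping to GUE; it rests on a local semicircle law and a lower bound for the distance from an eigenvalue to the remaining spectrum (established via control of an associated resolvent/least-singular-value problem), with the comparison to explicitly solvable ensembles entering in a more limited bootstrapping role. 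Beyond these points, your description of the three-moment variant --- that the $j=4$ Taylor term no longer cancels and the strengthened derivative hypothesis \eqref{g2} supplies the compensating $n^{-O(Cc_0)}$ decay --- is the right explanation of why hypothesis \eqref{g2} is needed there.
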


\begin{proof} See \cite[Theorem 15]{TVbulk}  (which handled the bulk case when $\delta n < i < (1-\delta) n$) and \cite[Theorem 1.13]{TVedge} (which handled the edge case).  These theorems can also handle the joint distribution of several eigenvalues at once, as well as somewhat more general ensembles than those in Definition \ref{herm-def}, but we will not discuss these generalisations here.  
\end{proof}

We will refer to the second part of Theorem \ref{theorem:main2} as the \emph{three moment theorem}.

Roughly speaking, Theorem \ref{theorem:main2} asserts that the distributions of $\lambda_i(M_n)$ and $\lambda_i(M'_n)$ differ by $O(n^{-1/2-c})$ for some $c>0$ if the atom distributions have matching moments to order $4$, and by $O_c(n^{-1/2+c})$ for any $c>0$ if the atom distributions only have matching moments to order $3$.  For instance, for sufficiently large $n$ one has
$$  \P( \lambda_i \leq a ) \leq \P( \lambda_i' \leq a + n^{-1/2-c} ) + n^{-c} $$
for some $c>0$ if one has matching moments to order $4$, and
$$  \P( \lambda_i \leq a ) \leq \P( \lambda_i' \leq a + n^{-1/2+c} ) + n^{-c} $$
for all $c>0$ (with $n$ sufficiently large depending on $c$) if one has matching moments to order $3$.  Morally speaking, this means that the medians ${\bf M} \lambda_i, {\bf M} \lambda'_i$ of $\lambda_i$, $\lambda_i'$ differ by $O(n^{-1/2-c})$ when there are four matching moments and by $O_c(n^{-1/2+c})$ when there are three matching moments, although this is not quite rigorous due to the presence of the $n^{-c}$ error in the above bounds.  (For some rigorous bounds on the median of $\lambda_i$, see Section \ref{local-proof}.)

The matching moment conditions are essential to the method of proof of Theorem \ref{theorem:main2}, which uses a Taylor expansion argument.  But it is natural to ask if these conditions are in fact necessary.  Indeed, if one is not interested in the distribution of \emph{individual} eigenvalues $\lambda_i(M_n)$, but instead in the
 \emph{$k$-point correlation functions} of these eigenvalues, then in the asymptotic limit $n \to \infty$ (and with appropriate normalisations), these correlation functions have a universal distribution regardless of how many matching moments there are; see \cite{TVbulk, ERSTVY} (with earlier partial results in this direction in \cite{Joh}, \cite{ERSY2}). It is not hard to see that the universality for the 
 limiting distributions (or joint distributions) of individual eigenvalues imply the 
 universality of the $k$-point correlation function. 

Our second main result below is evidence that the four moment hypothesis is indeed necessary if one wishes to control individual eigenvalues at the scale of the eigenvalue spacing \eqref{approx}.

\begin{theorem}[Necessity of fourth moment hypothesis]\label{theorem:main}  Let $M_n, M'_n$ be Wigner matrices whose atom variables  $\eta, \eta'$
satisfy  $\E \eta^3 = \E (\eta')^3=0$ but their fourth moment are different 
$\E \eta^4 \neq \E (\eta')^4$.  As before, write $\lambda_i := \lambda_i(M_n)$ and $\lambda'_i := \lambda_i(M'_n)$.  Then for all sufficiently large $n$, one has
$$ \sum_{i=1}^n | \E \lambda_i - \E \lambda_i'  | \geq \kappa n^{1/2}$$
for some $\kappa$ depending only on the atom distributions.
In particular (by the pigeonhole principle), there exists $1 \leq i \leq n$ such that
$$ | \E \lambda_i - \E \lambda_i'  | \geq \kappa' \min(i,n+1-i)^{-1/3} n^{-1/6},$$
where $\kappa' > 0$ depends only on the atom distributions.
\end{theorem}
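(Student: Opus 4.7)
The strategy is to test the spectra against the quartic $f(x) = x^4$, the lowest-degree polynomial whose trace can distinguish $M_n$ from $M_n'$ given that the first three moments of the atom distributions match.

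Step 1 is a direct moment-method calculation to establish
$$\E \tr M_n^4 - \E \tr (M'_n)^4 \;=\; \alpha_0\bigl(\E \eta^4 - \E(\eta')^4\bigr)\, n^2 \;+\; O(n)$$
for some explicit nonzero constant $\alpha_0$. One expands the trace as a sum $\sum \E \zeta_{i_0 i_1}\zeta_{i_1 i_2}\zeta_{i_2 i_3}\zeta_{i_3 i_0}$ over closed walks $i_0 \to i_1 \to i_2 \to i_3 \to i_0$ of length four: the leading $2n^3$ Catalan contribution depends only on $\E|\zeta_{ij}|^2 = 1$ and cancels in the difference; contributions that would see $\E \eta^3$ cancel by the matching hypothesis $\E\eta^3 = \E(\eta')^3 = 0$; and the degenerate walks $i_0 i_1 i_0 i_1 i_0$ traversing a single off-diagonal edge four times contribute $\E |\zeta_{ij}|^4$, a linear function of $\E \eta^4$ with nonzero slope, producing the asserted $\alpha_0(\E\eta^4 - \E(\eta')^4)n^2$ discrepancy.

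Step 2 is a Taylor expansion of the quartic around the classical locations. Writing $x_i := \lambda_i - \sqrt n \gamma_i$,
$$\lambda_i^4 \;=\; (\sqrt n\gamma_i)^4 \;+\; 4(\sqrt n\gamma_i)^3 x_i \;+\; R_i, \quad R_i \;:=\; 6(\sqrt n\gamma_i)^2 x_i^2 + 4\sqrt n\gamma_i\, x_i^3 + x_i^4.$$
The constant terms cancel on subtraction. The deterministic bound $|x_i| = O(\sqrt n)$ (from the Bai--Yin estimate $\|M_n\|\le(2+o(1))\sqrt n$, on the overwhelming event where it holds) yields $\E |x_i|^k \le Cn^{(k-2)/2} \E x_i^2$ for $k=3,4$. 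Combined with $|\gamma_i|\le 2$, this gives $\sum_i \E |R_i| = O(n)\sum_i \E x_i^2$. Summing the Theorem \ref{stronglocal} bound $\E x_i^2 = O(n^{-c}\min(i,n+1-i)^{-2/3} n^{2/3})$ over $i$ (using $\sum_{j\le n/2} j^{-2/3} \sim n^{1/3}$) produces $\sum_i \E x_i^2 = O(n^{1-c})$, hence $\sum_i \E |R_i| = O(n^{2-c})$, and similarly for the primed matrix.

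Combining the two steps, the $\Theta(n^2)$ discrepancy between the quartic traces must be absorbed entirely by the linear-in-$x_i$ term:
$$\Bigl|\sum_{i=1}^n 4(\sqrt n \gamma_i)^3(\E\lambda_i - \E\lambda_i')\Bigr| \;\ge\; \tfrac12 |\alpha_0|\, \bigl|\E\eta^4 - \E(\eta')^4\bigr|\, n^2$$
for $n$ large. Since $|4(\sqrt n\gamma_i)^3| \le 32\, n^{3/2}$, the triangle inequality yields the main bound $\sum_i |\E\lambda_i - \E\lambda_i'| \ge \kappa n^{1/2}$ for a suitable $\kappa>0$ depending on the atom distributions. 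The ``in particular'' conclusion follows by weighted pigeonhole: the tentative per-index threshold $\min(i,n+1-i)^{-1/3} n^{-1/6}$ sums to $\Theta(n^{1/2})$, matching the proven global lower bound, so at least one $i$ must attain it. The main obstacle is the bookkeeping verifying $\alpha_0 \ne 0$, which requires separating the complex from the real Wigner case (as $\E\zeta_{ij}^2$ vanishes in the former but not in the latter) and keeping track of diagonal contributions $\E\zeta_{ii}^k$, which one must assume match between $M_n$ and $M_n'$ so that no spurious diagonal terms survive at order $n^2$.
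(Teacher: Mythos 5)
Your proposal is correct and follows essentially the same route as the paper: the fourth-moment trace identity (the paper's Lemma~\ref{fourth}, giving $\alpha_0=2$ so that the discrepancy is $2\kappa_0(n^2-n)$), a Taylor expansion of $\lambda_i^4$ about $\sqrt n\gamma_i$ with the quadratic-and-higher remainder controlled by Theorem~\ref{stronglocal} together with Bai--Yin concentration (the paper's Proposition~\ref{fourthconc}), and then the triangle inequality and pigeonhole. Your side remarks on diagonal entries and the real case are sound observations but harmless here: the diagonal walks only contribute at order $n$ and the paper works in the complex Hermitian setting where $\E\zeta_{ij}^2=0$ off the diagonal.
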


Theorem \ref{theorem:main} is not exactly comparable to the four moment theorem, as it pertains to the \emph{mean} of the eigenvalues $\lambda_i$, whereas the four moment theorem instead controls quantities such as the \emph{median}.  However, it is expected that the mean and median of $\lambda_i$ should be quite close (in particular, closer than the expected eigenvalue spacing \eqref{approx}), but the best known concentration results for $\lambda_i$ (such as Theorem \ref{stronglocal}) are not strong enough to establish this yet.  If one assumes that the mean and median are sufficiently close, then Theorem \ref{theorem:main} is strong evidence that the four moment theorem breaks down if one only assumes three matching moments.

The three moment theorem implies (roughly speaking) that the medians of $\lambda_i, \lambda'_i$ should only differ by $O(n^{-1/2+c_0})$, for arbitrarily small $c_0>0$.  In view of this, one expects the index $i$ provided by Theorem \ref{theorem:main} to lie in the bulk region $\delta n < i < (1-\delta) n$, and indeed the conclusion Theorem \ref{theorem:main} should in fact hold for \emph{most} $i$ in this bulk region.  However, we were unable to demonstrate this.  Nevertheless, concentration bounds such as those given earlier in this section should be able to establish some non-trivial lower bound on $\min(i,n+1-i)$.

The question that  how each particular eigenvalue reacts to a change in  the fourth moment looks very interesting. By utilising higher moments and making some heuristic arguments (see the last section of the paper)  we are led to the following precise conjecture.

\begin{conjecture}[Conjectured asymptotic]\label{conj-asym}  For $\delta n \leq i \leq (1-\delta) n$ with $\delta>0$ fixed, one has
$$ \E \lambda_i = n^{1/2} \gamma_i + n^{-1/2} C_{i,n} + \frac{1}{4 \sqrt{n}} (\gamma_i^3 - 2 \gamma_i) \E \eta^4 + O_\delta(n^{-1/2-c} )$$
for some absolute constant $c>0$, where $C_{i,n}$ is some bounded quantity depending only on $i, n$ (and is in particular independent of $\eta$).  The same statement should also be true for the median ${\bf M} \lambda_i$.
\end{conjecture}
 We ran a numerical test to check the conjecture. We generated two random matrices models with whose entry's distributions are Gaussian $N(0,1)$ and Laplace $(0,1/\sqrt{2})$ and investigated the behaviour of the difference $\E \lambda_i-\E \lambda_i'$ as a function of $i$.
 
\begin{figure}[htbp]
  \centering 
  \includegraphics[scale=0.9]{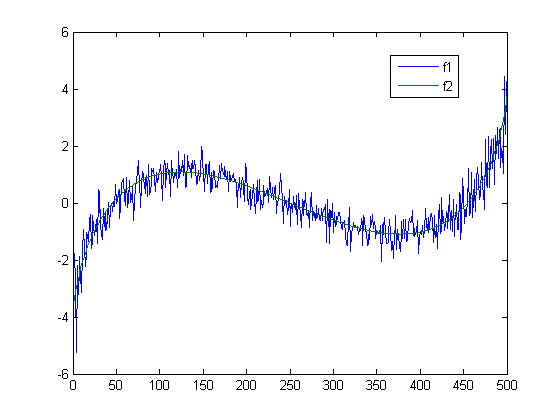}
  \captionstyle{center} 
  \onelinecaptionsfalse 
  \caption{Comparison between 2 curves $f_1=4\sqrt{n}\frac{\E \lambda_i-\E \lambda_i'}{\E\eta^4-\E\eta'^4}$ and $f_2=\gamma_i^3 - 2 \gamma_i$. ($n=500$)} 
\label{fig:SCL1}
 \end{figure}
 
This conjecture would imply that if one increases the fourth moment, then (in expectation) those $\lambda_i$ with $\gamma_i  \le -\sqrt 2$ or $0 \le \gamma_i \le \sqrt 2$ are shift to the left (decreasing), while those $\lambda_i$ with $\gamma_i  \ge \sqrt 2$ or $0 \ge  \gamma_i \ge -\sqrt 2$ are shifted to the right (increasing). In other words, the eigenvalues in the middle move toward the center of the spectrum, while those closer to the edge move outward. 

We prove Theorem \ref{theorem:main} in Section \ref{mainsec}.  Apart from Theorem \ref{stronglocal}, the main ingredient is a standard moment computation (Lemma \ref{fourth}) that compares $\sum_{i=1}^n \lambda_i^4$ with $\sum_{i=1}^n (\lambda'_i)^4$.


{\it Acknowledgement.} We would like to thank K. Johansson for a useful conversation and O. Zeitouni for confirming Lemma \ref{slick}.  Thanks to R. Killip, M. Meckes, and H.T. Yau for corrections.

{\it Notation.}  We use the usual asymptotic notation as $n \to \infty$, thus $O(f(n))$ denotes a quantity $g(n)$ bounded in magnitude by $C f(n)$, and $o(f(n))$ denotes a quantity $g(n)$ bounded in magnitude by $c(n) f(n)$, where $c(n) \to 0$ as $n \to \infty$.  If we need, $c, C$ to depend on additional parameters, we indicate this by subscripts, e.g. $o_k(f(n))$ is a quantity bounded in magnitude by $c_k(n) f(n)$, where $c_k(n) \to 0$ as $n \to \infty$ for each fixed $k$.

\section{Proofs of Theorems \ref{stronglocal} and \ref{rateofconvergence}}\label{local-proof}

We now prove Theorem \ref{stronglocal}.  We may assume without loss of generality that $\eps>0$ is small; all implied constants can depend on $\eps$, and we assume that $n$ is sufficiently large depending on $\eps$.

We first observe that the claim is true in the edge cases $i=1,n$.  Indeed, in those cases the estimate \eqref{mox} (or \eqref{vu}) gives
$$
\lambda_i(M_n) = {\bf M} \lambda_i(M_n) + O( n^{\eps} )$$
with probability $1-O(n^{-100})$.  On the other hand, the Tracy-Widom law for Wigner matrices (see \cite{TVedge}) gives
$$
{\bf M} \lambda_i(M_n) = \gamma_i \sqrt{n} + O(n^{-1/6})$$
at the edge, and the claim follows (with plenty of room to spare).

We may now reduce to the bulk case $\min(i,n-i) \geq n^{1/2+\eps}$.  Indeed, one can deduce the edge case $1 < i < n^{1/2+\eps}$ from the bulk case by setting $i_0$ to be the least integer greater than $n^{1/2+\eps}$, and using the crude pointwise bound
$$
|\lambda_i(M_n)-\sqrt{n} \gamma_i|
\leq |\lambda_1(M_n)-\sqrt{n} \gamma_1| + |\lambda_{i_0}(M_n)-\sqrt{n} \gamma_{i_0}| + |\sqrt{n} \gamma_1 - \sqrt{n} \gamma_{i_0}|$$
and observing that $\sqrt{n} \gamma_1 - \sqrt{n} \gamma_{i_0} = O( n^{1/6 + \eps/3} )$.  Similarly to deal with the case $n - n^{(1+\eps)/2} < i < n$.

Henceforth we fix $i$ with $\min(i,n-i) \geq n^{1/2+\eps}$.  The next step is to verify the theorem in the model case that $M_n$ is the GUE random matrix ensemble.  In this case, much sharper concentration results are known.  Indeed, we have

\begin{lemma}[Concentration for GUE]\label{slick} Let $M_n$ be a GUE matrix, and let $I \subset \R$ be an interval.  Let $N_I$ be the counting function $N_I := \{ 1 \leq i \leq n: \frac{1}{\sqrt{n}} \lambda_i(M_n) \in I \}$.
Then one has
$$ \P( | N_I - n \int_I \rho_{sc}(x)\ dx| \geq n^\eps ) \leq n^{-100} $$
(say) uniformly in $I$, if $n$ is sufficiently large depending on $\eps$.
\end{lemma}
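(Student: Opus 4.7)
The plan is to exploit the determinantal structure of the GUE eigenvalue process. The eigenvalues $(\lambda_i(M_n)/\sqrt{n})_{i=1}^n$ form a determinantal point process on $\R$ with an explicit kernel $K_n$ (the rescaled Christoffel--Darboux kernel built from Hermite polynomials). By the classical theorem of Hough--Krishnapur--Peres--Vir\'ag, for any Borel set $I$ the counting function $N_I$ has the same distribution as a sum $\sum_k \xi_k$ of independent Bernoulli random variables whose parameters $\pi_k \in [0,1]$ are the eigenvalues of the trace-class operator on $L^2(I)$ with kernel $K_n(x,y)$. In particular $N_I$ is a sum of independent $[0,1]$-valued random variables with mean $\mu := \sum_k \pi_k = \int_I K_n(x,x)\,dx$ and variance $\sigma^2 := \sum_k \pi_k(1-\pi_k)$.

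Next I would match $\mu$ and $\sigma^2$ against the semicircular prediction. For the mean, classical Plancherel--Rotach asymptotics for Hermite polynomials give
\[
\mu = n\int_I \rho_{sc}(x)\,dx + O(\log n)
\]
uniformly in $I$, the logarithm absorbing boundary effects at $\partial I$ and at the spectral edge. For the variance, the Costin--Lebowitz identity
\[
\sigma^2 = \int_I \int_{\R \setminus I} |K_n(x,y)|^2\,dx\,dy,
\]
combined with the sine-kernel behaviour of $K_n$ in the bulk and its Airy-kernel behaviour at the edge, yields the uniform bound $\sigma^2 = O(\log n)$.

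With these two inputs, Bernstein's inequality applied to the Bernoulli representation gives, for any $t>0$,
\[
\P( |N_I - \mu| \geq t ) \leq 2\exp\!\left( - \frac{t^2/2}{\sigma^2 + t/3} \right).
\]
Taking $t = \tfrac{1}{2} n^\eps$ and using $\sigma^2 = O(\log n)$ yields a bound of $\exp(-\Omega(n^\eps / \log n))$, which is far smaller than $n^{-100}$ once $n$ is large enough depending on $\eps$. Combining with the mean estimate $|\mu - n\int_I \rho_{sc}| = O(\log n) \leq \tfrac{1}{2} n^\eps$ then gives the claim.

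The hard part, in my view, is producing the uniform logarithmic variance bound across \emph{all} intervals $I$, including semi-infinite ones and those that straddle or lie beyond the spectral edge. However, this is a standard consequence of the well-known asymptotics for the Christoffel--Darboux kernel (sine-kernel scaling in the bulk, Airy-kernel scaling at the edge, with super-polynomial decay well outside the spectrum), and in fact contributes only $O(1)$ to $\sigma^2$ when $I$ meets only the edge region and $O(\log n)$ in the worst case of a bulk subinterval.
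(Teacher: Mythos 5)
Your proposal is correct and follows essentially the same route as the paper: the Bernoulli representation of $N_I$ for determinantal processes (the paper cites \cite[Corollary 4.2.24]{AGZbook}, which is the Hough--Krishnapur--Peres--Vir\'ag result), the logarithmic variance bound (the paper cites \cite[Lemma 2.3]{Gus}, which is your Costin--Lebowitz computation with sine/Airy kernel asymptotics), and a standard concentration inequality for sums of independent bounded random variables. The paper simply records these as citations rather than spelling out the Bernstein estimate as you do.
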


\begin{proof}
This follows from the fact that the number of eigenvalues of GUE in $I$ can be expressed\footnote{In fact, in the GUE case $N_I$ has a binomial distribution, though we will not need this fact here.} as the sum of independent random variables (see \cite[Corollary 4.2.24]{AGZbook}), with variance of logarithmic size (see \cite[Lemma 2.3]{Gus}) and thus strongly concentrated. For details, see \cite{AGZbook}.  In the bulk region $I \subset [-2+\delta,2-\delta]$, this type of result (for more general Wigner ensembles) was established in \cite[Theorem 6.3]{EYY}.
\end{proof}

From this lemma and a standard computation, we conclude that in the GUE case one has
\begin{equation}\label{limi}
 \lambda_i(M_n) = \sqrt{n} \gamma_i + O( n^\eps \min(i,n+1-i)^{-1/3} n^{-1/6} ) 
\end{equation}
with probability $1-O(n^{-100})$.  
From this bound (and using very crude estimates to control the tail event of probability $O(n^{-100})$, e.g. controlling $\lambda_i(M_n)$ by the Frobenius norm of $M_n$), we have
$$ \E |\lambda_i(M_n)-\sqrt{n} \gamma_i|^2  = O( n^\eps \min(i,n+1-i)^{-1/3} n^{-1/6} )^2 + O(n^{-10})$$
(say), which easily implies \eqref{lamin} (with some room to spare).

\begin{remark}  In the bulk case $\delta n < i < (1-\delta) n$, a result of Gustavsson \cite{Gus} shows the related statement that $\lambda_i(M_n)$ for GUE is asymptotically normally distributed around $\sqrt{n} \gamma_i$ with variance $\frac{2 \log n}{(4-\gamma_i^2) n}$, which is consistent with the above concentration results.  Using the four moment theorem, the result of Gustavsson was extended to other Wigner matrices in \cite{TVbulk}.  In particular, this gives a bound
$$ {\bf M} \lambda_i(M_n) = \sqrt{n} \gamma_i + o( \frac{\sqrt{\log n}}{\sqrt{n}} )$$
for the median uniformly in the bulk region $\delta n \leq i \leq (1-\delta) n$ for fixed $\delta > 0$, whenever the atom distribution of $M_n$ has vanishing third moment.  As a consequence of the recent results in \cite{EYY}, a similar result (with $n^\eps$ instead of $\sqrt{\log n}$) holds without the vanishing third moment hypothesis.
\end{remark}

Now we pass from the GUE case to more general Wigner matrices with vanishing third moment.  The main tool here is the three moment theorem (the second part of Theorem \ref{theorem:main2}).  We will also need a weak version of Lemma \ref{slick} in the non-GUE case:

\begin{lemma}[Weak concentration for Wigner]\label{slick-2} Let $M_n$ be a Wigner matrix, and let $I \subset \R$ be an interval.  Let $N_I$ be the counting function $N_I := \{ 1 \leq i \leq n: \frac{1}{\sqrt{n}} \lambda_i(M_n) \in I \}$.
Then for any fixed $\eps > 0$, one has
$$ \P( | N_I - n \int_I \rho_{sc}(x)\ dx| \geq n^{1/2+\eps} ) \leq n^{-100} $$
(say) uniformly in $I$, if $n$ is sufficiently large depending on $\eps$.
\end{lemma}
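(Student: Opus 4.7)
The plan is to split
$$ N_I - n \int_I \rho_{sc}(x)\,dx = (N_I - \E N_I) + \bigl(\E N_I - n \int_I \rho_{sc}(x)\,dx\bigr) $$
and bound the deterministic bias and the random fluctuation separately. For the bias piece, the uniform exponential decay \eqref{ued} supplies entries with all moments finite; in particular, the classical rate-of-convergence theory for the averaged empirical spectral distribution (Bai's theorem \cite{BS}, refined in \cite{GT}; see the discussion just before Theorem \ref{rateofconvergence}) yields
$$ \sup_{x \in \R} \left| \frac{1}{n} \E N_{(-\infty,\, x]} - \int_{-\infty}^x \rho_{sc}(t)\,dt \right| = O(n^{-1/2}). $$
Writing $I = (a,b]$ and subtracting, this gives $|\E N_I - n \int_I \rho_{sc}| = O(n^{1/2})$ uniformly in $I$, which is already absorbed by the target bound $n^{1/2+\eps}$.

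For the fluctuation $N_I - \E N_I$, I would use a standard Hermitian row-exposure Doob martingale. Let $\CF_k$ be the $\sigma$-algebra generated by the entries $\zeta_{ij}$ with $\min(i,j) \le k$, and set $X_k := \E[N_I \mid \CF_k]$, so that $X_0 = \E N_I$ and $X_n = N_I$. The deterministic key fact is that if $M, M'$ are two $n \times n$ Hermitian matrices differing only in the $k$-th row and column, then $M - M'$ has rank at most $2$, so by the Weyl interlacing inequality $|N_I(M) - N_I(M')| \le 4$ for every interval $I$. Averaging out the remaining independent rows preserves this bound, so $|X_k - X_{k-1}| \le 4$ almost surely, and Azuma--Hoeffding gives
$$ \P\bigl(|N_I - \E N_I| \ge n^{1/2+\eps}\bigr) \le 2 \exp(-n^{2\eps}/32), $$
which is far smaller than $n^{-100}$ once $n$ is large enough.

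Combining the two estimates yields $|N_I - n \int_I \rho_{sc}| = O(n^{1/2+\eps})$ outside an event of probability at most $n^{-100}$, which is the desired conclusion after adjusting $\eps$ slightly. The only point one might worry about is that the matrix entries are not uniformly bounded, which could in principle make the martingale increments unbounded. However, this turns out to be a non-issue here: the interlacing bound $|N_I(M) - N_I(M')| \le 4$ is a purely deterministic statement about rank-$2$ Hermitian perturbations, and is insensitive to the magnitudes of the row and column being swapped, so no truncation step is needed. This robustness is precisely what makes the weak Lemma \ref{slick-2} considerably easier than its sharp GUE analogue Lemma \ref{slick}, where one must exploit the explicit determinantal structure to gain the extra factor of $n^{1/2}$.
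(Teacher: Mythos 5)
Your proof is correct, and it takes a genuinely different route from the paper's. The paper simply cites the Guionnet--Zeitouni concentration-of-measure framework (Talagrand/Herbst for Lipschitz linear statistics of the spectrum) together with the modifications in \cite[Appendix F]{TVhard} needed to handle unbounded entries and the discontinuous indicator $1_I$. You instead run a row-exposure Doob martingale: exposing one row/column at a time, Cauchy interlacing bounds each increment deterministically (a rank-$2$ Hermitian perturbation changes any $N_I$ by at most a fixed constant), and Azuma--Hoeffding then gives sub-Gaussian concentration of $N_I$ around $\E N_I$ at scale $\sqrt{n}$; the deterministic bias $\E N_I - n\int_I \rho_{sc}$ is absorbed by the G\"otze--Tikhomirov $O(n^{-1/2})$ rate, which the paper itself invokes in the discussion preceding Theorem \ref{rateofconvergence}. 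Your route has the advantage of being elementary and self-contained: the interlacing bound is a pure rank statement, so no smoothing of $1_I$ and no truncation of the entries is needed, whereas the cited GZ/TVhard approach requires both but is more flexible (it applies to general Lipschitz statistics, not just counting functions). Two minor remarks: the sharp interlacing constant for a rank-$2$ Hermitian perturbation is $|N_I(M)-N_I(M')| \le 2$ rather than $4$ (the change in $N_{(-\infty,t]}$ is at most $\max(p,q)$ where $p,q$ are the numbers of positive/negative eigenvalues of $M-M'$, and the contributions from the two endpoints of $I$ combine to give at most $p+q=\rank$), but the conservative constant $4$ is of course harmless; and it is worth noting explicitly that in passing from $|X_k - X_{k-1}|$ to the deterministic swap bound one replaces the $k$-th row/column by an independent copy while keeping the same integration over future rows, which is what reduces the martingale increment to the rank-$2$ comparison.
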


\begin{proof}  This follows from the concentration of measure approach first developed in \cite{GZ}, and then modified in \cite[Appendix F]{TVhard} to deal with exponentially decaying entries and with the discontinuous nature of the indicator function $1_I$.  As mentioned earlier, in the bulk case $I \subset [-2+\delta,2-\delta]$, stronger results of this type have also recently been obtained in \cite[Theorem 6.3]{EYY}.
\end{proof}

As a consequence of this lemma and our hypothesis $\min(i,n-i) > n^{1/2+\eps}$, one has the bound \eqref{lamn} with probability $1-O(n^{-100})$.  In other words, we can find a quantity $R$ comparable to $n^{1/2+\eps} \min(i,n+1-i)^{-1/3} n^{-1/6}$ such that
\begin{equation}\label{limi-2}
 \P( | \lambda_i(M_n) - \sqrt{n} \gamma_i | \geq R ) \leq n^{-100}.
\end{equation}
We now introduce the function
$$ G( x ) := \psi( \frac{x-n\gamma_i}{\sqrt{n} R} )$$
where $\psi: \R \to [0,1]$ is a smooth cutoff function supported on $[-2,2]$ that equals $\psi(x) := \frac{1}{10} x^2$ on $[-1,1]$.  Note that $\sqrt{n} R \geq \sqrt{n}$.  As such, for sufficiently small $c_0>0$ (independent of $\eps$), one easily verifies that
$$ |\nabla^j G(x)| \leq n^{-Cjc_0}$$
for $0 \leq j \leq 5$ and all $x \in \R$, where $C$ is the constant in the three moment theorem.  We may thus apply that theorem and conclude that
$$ \E G( \sqrt{n} \lambda_i(M_n) ) = \E G( \sqrt{n} \lambda_i(M'_n) ) + O(n^{-c_0})$$
for some absolute constant $c_0>0$, where $M'_n$ is drawn from GUE.  On the other hand, from \eqref{limi} one easily sees that
$$ \E G( \sqrt{n} \lambda_i(M'_n) ) = \frac{1}{10 R^2} \E | \lambda_i(M'_n) - \sqrt{n} \gamma_i |^2 + O(n^{-10})$$
(say), and similarly from \eqref{limi-2} one has
$$ \E G( \sqrt{n} \lambda_i(M_n) ) = \frac{1}{10 R^2} \E | \lambda_i(M_n) - \sqrt{n} \gamma_i |^2 + O(n^{-10}).$$
We conclude that
$$ \E | \lambda_i(M_n) - \sqrt{n} \gamma_i |^2 = \E | \lambda_i(M'_n) - \sqrt{n} \gamma_i |^2 + O( n^{-c_0} R^2 ) + O(n^{-10} ).$$
Substituting in the definition of $R$, we obtain Theorem \ref{stronglocal}.

\begin{remark}  From \eqref{limi} and the three moment theorem one can also show that for any Wigner matrix $M_n$ whose atom distribution has third vanishing moment, and any $1 \leq i \leq n$, one has
\begin{equation}\label{lma}
 \lambda_i(M_n) = \sqrt{n} \gamma_i + O( n^\eps \min(i,n+1-i)^{-1/3} n^{-1/6} ) 
\end{equation}
with probability $1-O_\eps(n^{-c})$ for some absolute constant $c>0$; in particular one has
$$
 {\bf M} \lambda_i(M_n) = \sqrt{n} \gamma_i + O_\eps( n^\eps \min(i,n+1-i)^{-1/3} n^{-1/6} ).
$$
We omit the details,which are similar to the above calculations and also to the proof of \cite[Theorem 32]{TVbulk} (which is essentially the bulk case of \eqref{lma}).  Heuristically, this suggests that one can take $c=1-\eps$ in \eqref{lamin}, which would be consistent with the results in \cite{EYY}; however, the available bound $O(n^{-c})$ of the tail probability for \eqref{lma} is too weak to make this heuristic rigorous.  Unfortunately, even if one assumes more than three matching moments, the methods of proof in \cite{TVbulk}, \cite{TVedge} do not seem strong enough to establish this conjecture; the main technical obstacle arises from the need to truncate away the event that an eigenvalue gap such as $\lambda_{i+1}(M_n) - \lambda_i(M_n)$ is unexpectedly small, such events occur with a probability of size $O(n^{-c})$ but no better.
\end{remark}

\begin{remark}
The vanishing third moment was needed since we compare to GUE.  One can omit this assumption if one can extend Lemma \ref{slick} to Johansson matrices (i.e. Wigner matrices whose atom distribution is gauss divisible, see \cite{Joh}). In fact, one only needs  this lemma with some $\eps <1/2$.  A similar remark applies to the proof of Theorem \ref{rateofconvergence} below.  The techniques in the recent paper \cite{EYY} also implies a version of Theorem \ref{stronglocal} in which no condition on the third moment is required (but with a slightly different right-hand side).
\end{remark} 

\subsection{Proof of Theorem \ref{rateofconvergence}}\label{rate-sec}

We now prove Theorem \ref{rateofconvergence}.  The method of proof is only a slight variant of that used above.

Fix $x$, and let $\eps> 0$ be a small absolute constant to be chosen later.  From Lemma \ref{slick-2} one has
$$ 
i_x - R \leq N_{[-2,x]} \leq i_x + R$$
with probability $1-O(n^{-100})$, where $R = O(n^{1/2+\eps})$ and
$$ i_x := n \int_{-\infty}^x \rho_{sc}(t)\ dt.$$
Inside the interval $I_x := [i_x - R, i_x - R] \cap [1,n]$, we locate $m = O(n^{1/4+2\eps})$ integers $i_1,\ldots,i_m$ such that every integer in $I_x$ lies within $O(n^{1/4-\eps})$ of one of the $i_j$.  Then with probability $1-O(n^{-100})$, one has
$$ 
N_{[-2,x]} = i_x - R + \sum_{i_x-R < j \leq i_x+R} {\bf I}( \frac{1}{\sqrt{n}} \lambda_{i_j}(M_n) \leq x )$$
where ${\bf I}(E)$ is the indicator of an event $E$, and we adopt the conventions that $\lambda_i(M_n)=-\infty$ for $i<1$ and $\lambda_i(M_n)=+\infty$ for $i>n$.  Taking expectations, we conclude that
$$ 
F_n(x) = \frac{1}{n} (i_x - R) + \frac{1}{n} \sum_{i_x-R < j \leq i_x+R} {\bf P}( \frac{1}{\sqrt{n}} \lambda_{j}(M_n) \leq x ) + O( n^{-10} )$$
(say).  Let $M'_n$ be sampled using GUE.  Using the three moment theorem as in \cite[Corollary 21]{TVbulk}, we have
$$
{\bf P}( \frac{1}{n} \lambda_{j}(M_n) \leq x ) \leq {\bf P}( \frac{1}{\sqrt  n} \lambda_{j}(M'_n) \leq x +n^{-1+c'} ) + O(n^{-c})$$
uniformly in $j$, for some small absolute $c, c' >0$ independent of $\eps$.  Using this bound, we conclude that
$$ 
F_n(x) \leq \frac{1}{n} (i_x - R) + \frac{1}{n} \sum_{i_x-R < j \leq i_x+R} {\bf P}( \frac{1}{\sqrt  n} \lambda_{j}(M'_n) \leq x + n^{-1+c'} ) + O( \frac{R}{n} n^{-c} ) + O(n^{-10}).$$
The second error term is $O( n^{-1/2-\eps} )$ if $\eps$ is small enough depending on $c$.  On the other hand, using Lemma \ref{slick-2} for $M'_n$ instead of $M_n$, we conclude from a variant of the above arguments that
$$ F'_n(x+n^{-1+c'}) = \frac{1}{n} (i_x - R) + \frac{1}{n} \sum_{i_x-R < j \leq i_x+R} {\bf P}( \frac{1}{\sqrt  n} \lambda_{j}(M'_n) \leq x + n^{-1+c} ) + O( n^{-10} ) $$
where $F'_n$ is the counterpart of $F_n$ for $M'_n$ instead of $M_n$.  We conclude that
$$ F_n(x) \leq F'_n(x+n^{-1+c'}) + O( n^{-1/2-\eps} );$$
a similar argument also gives
$$ F_n(x) \geq F'_n(x-n^{-1+c'}) - O( n^{-1/2-\eps} ).$$
On the other hand, from Lemma \ref{slick} one easily sees that
$$ F'_n(x) = \int_{-\infty}^x \rho_{sc}(t)\ dt + O(n^{-1+\eps})$$
for all $x$, and the claim follows.

\begin{remark}\label{remark:median} Using the three moment theorem, one can control the \emph{median} ${\bf M} N_{[-2,x]}$ with much higher accuracy than the \emph{mean} ${\bf E} N_{[-2,x]}$.  Indeed, using \eqref{lma} it is not difficult to show that
$$ {\bf M} N_I = n\int_I \rho_{sc}(x)\ dx + O_\eps( n^{\eps} )$$
uniformly for all intervals $I$ and any $\eps > 0$, assuming vanishing third moment of the atom distribution; we leave the details to the interested reader.  In view of this, it is reasonable to conjecture that one can take $c$ arbitrarily close to $1$ in Theorem \ref{rateofconvergence}.  In \cite{EYY}, this claim is established in the bulk region $I \subset [-2+\delta,2-\delta]$.
\end{remark}

\section{Proof of Theorem \ref{theorem:main}}\label{mainsec}

We now begin the proof of Theorem \ref{theorem:main}.  Let $M_n, M'_n, \lambda_i, \lambda'_i$ be as in that theorem.  The starting point is the following fourth moment calculation:

\begin{lemma}[Fourth moment calculation]\label{fourth}  
Set $\kappa_0 := \E (\eta^4) - \E((\eta')^4) $, thus $\kappa_0 \neq 0$ by hypothesis.  Then
$$ \sum_{i=1}^n \E(\lambda_i^4) - \E((\lambda'_i)^4) = 2 \kappa_0 (n^2-n).$$
\end{lemma}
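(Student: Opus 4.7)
The plan is to reduce the claim to a direct moment computation via the identity $\sum_{i=1}^n \lambda_i^4 = \trace(M_n^4)$, and then expand
$$\E\trace(M_n^4) = \sum_{i_1,i_2,i_3,i_4} \E\bigl[\zeta_{i_1i_2}\zeta_{i_2i_3}\zeta_{i_3i_4}\zeta_{i_4i_1}\bigr].$$
First I would classify each summand by the partition structure of $(i_1,i_2,i_3,i_4)$, viewing the four consecutive pairs as edges of a closed length-$4$ walk. By independence of the upper-triangular entries (subject to $\zeta_{ji}=\overline{\zeta_{ij}}$) together with the vanishing of the first moments of both $\eta$ and $\tilde\eta$, a summand is nonzero only if every edge of the walk appears at least twice in the multiset of traversed edges.

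Next I would enumerate the surviving walks case by case. The four cases are: (a) all four indices distinct, giving four distinct edges and hence contribution $0$; (b) exactly three distinct indices, where a short analysis shows the only contributing walks are those with a ``hub'' vertex occupying two non-adjacent positions while the remaining two distinct indices each appear once, contributing $\E|\zeta_{ij}|^2\cdot\E|\zeta_{ik}|^2 = 1$ per walk, with a total of $2n(n-1)(n-2)$ such walks; (c) exactly two distinct indices, split into the ``three of a kind'' walks (cyclic rotations of $(a,a,a,b)$), contributing $\E\zeta_{aa}^2\cdot\E|\zeta_{ab}|^2 = 1$ with total $4n(n-1)$ walks, and the alternating walks $(a,b,a,b)$ and $(b,a,b,a)$ contributing $\E|\zeta_{ab}|^4$ with total $n(n-1)$ walks; (d) all indices equal, contributing $n\,\E\tilde\eta^4$. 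Assembling these,
$$\E\trace(M_n^4) = 2n(n-1)(n-2) + 4n(n-1) + n(n-1)\,\E|\zeta_{12}|^4 + n\,\E\tilde\eta^4.$$

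The crucial observation that makes the proof go through cleanly is that the atom distribution $\eta$ enters this expression only through the single term $n(n-1)\,\E|\zeta_{12}|^4$: every other summand depends only on the variance $\E|\zeta|^2 = 1$ or on the diagonal distribution $\tilde\eta$, both of which are held fixed between $M_n$ and $M'_n$. Subtracting the analogous formula for $M'_n$, every term except the off-diagonal fourth-moment contribution cancels, leaving
$$\sum_{i=1}^n \bigl(\E\lambda_i^4 - \E(\lambda'_i)^4\bigr) = n(n-1)\bigl(\E|\zeta_{12}|^4 - \E|\zeta'_{12}|^4\bigr).$$
A short calculation identifies $\E|\zeta_{12}|^4 - \E|\zeta'_{12}|^4$ with $2\kappa_0$ (the factor $2$ arises from expanding $|\zeta|^4 = (\Re\zeta)^4 + 2(\Re\zeta)^2(\Im\zeta)^2 + (\Im\zeta)^4$, noting the middle term is fixed by the variance and the two outer terms each contribute one copy of $\kappa_0$). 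This yields the stated identity $2\kappa_0(n^2-n)$. The only ``obstacle'' is the case bookkeeping in step two; there is no conceptual difficulty, and the proof is entirely a direct Wigner-style moment expansion.
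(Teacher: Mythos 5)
Your proposal is correct and follows essentially the same route as the paper: expand $\E\trace(M_n^4)$ as a sum over closed length-$4$ walks, kill walks with a singleton edge by mean-zero plus independence, and reduce the difference to the alternating walks $(a,b,a,b)$ that produce $\E|\zeta_{ab}|^4$. The only stylistic difference is that you enumerate all surviving walk types and compute $\E\trace(M_n^4)$ in full before subtracting, whereas the paper shortcuts by observing that any walk in which no edge has multiplicity four gives identical contributions for $M_n$ and $M'_n$ (because the two ensembles match to third order) and hence cancels without needing an explicit count; the paper thus never needs the $2n(n-1)(n-2)$, $4n(n-1)$, or diagonal terms. Your version is more work but also yields the exact value of $\E\trace(M_n^4)$ as a by-product, and your sanity checks (e.g.\ that the middle term $2(\E\eta^2)^2$ in $\E|\zeta|^4$ is pinned by the unit-variance normalization) match the paper's unstated ``short calculation.'' One small point worth flagging: both you and the paper tacitly use that the diagonal distributions $\tilde\eta$ agree for $M_n$ and $M'_n$ so that the $(a,a,a,a)$ term cancels; you state this assumption explicitly, which is a slight improvement in rigor over the paper's phrasing.
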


\begin{proof}  We expand
\begin{align*}
\sum_{i=1}^n \E(\lambda_i^4)&= \E(\tr M_n^4)\\
&= \sum_{1 \leq a,b,c,d \leq n} \E \zeta_{ab} \zeta_{bc} \zeta_{cd} \zeta_{da}.
\end{align*}

Of course, there is a similar formula for $\sum_{i=1}^n \E \lambda_i'^4$, in which the $\zeta_{ij}$ are replaced by $\zeta'_{ij}$.

Consider the four sets $\{a,b\}, \{b,c\}, \{c,d\}, \{d,a\}$.  If one of these sets occurs with multiplicity one, then the expectation $\E \zeta_{ab} \zeta_{bc} \zeta_{cd} \zeta_{da}$ vanishes 
from the mean zero and independence properties of the coefficients of $M_n$.  If instead one has two pairs of sets occuring with multiplicity two, then the expectation of $\E \zeta_{ab} \zeta_{bc} \zeta_{cd} \zeta_{da}$ is equal to that of $\E \zeta'_{ab} \zeta'_{bc} \zeta'_{cd} \zeta'_{da}$.  From this we see that
$$
\sum_{i=1}^n \E(\lambda_i^4) - \sum_{i=1}^n \E((\lambda_i')^4)
= 2 \sum_{1 \leq a < b \leq n} \E(|\zeta_{ab}|^4) - \E(|\zeta'_{ab}|^4).$$
But a short calculation (using the fact that $\eta, \eta'$ match to third order) reveals that
$$ \E(|\zeta_{ab}|^4) - \E(|\zeta'_{ab}|^4) = 2 (\E(\eta^4) - \E((\eta')^4)).$$
The claim follows.
\end{proof}

The value $\max \{ |\lambda_1|, |\lambda_n | \}$ is called the spectral norm of $M_n$ and will be denoted by
$\|M_n \|$.  The following result is well-known:

\begin{lemma}[Concentration of the spectral norm]\label{V}  For any $A \geq 0$, one has
$$\P( \| M_n \| \geq 3 n^{1/2} ) = O_A(n^{-A}).$$
In particular,
$$\P( |\lambda_i(M_n)| \geq 3 n^{1/2} ) = O_A(n^{-A}) $$
and
$$ \E |\lambda_i(M_n)|^A = O_A( n^{A/2} ).$$
\end{lemma}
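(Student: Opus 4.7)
The plan is to establish the tail bound $\P(\|M_n\| \ge 3\sqrt{n}) = O_A(n^{-A})$ by the classical moment (trace) method, in the spirit of Bai--Yin \cite{baiyin}, and then to deduce the remaining two claims by elementary manipulations.

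First, I would exploit the uniform exponential decay \eqref{ued} to truncate: replace $\zeta_{ij}$ by $\tilde\zeta_{ij} := \zeta_{ij} \mathbf{1}_{|\zeta_{ij}| \le n^{\eps}}$ for some small $\eps > 0$, and then recenter/renormalise. Since $\P(|\zeta_{ij}| > n^\eps) \le \exp(-n^{\eps/C})$, a union bound over the $n^2$ entries shows that the truncated matrix $\tilde M_n$ agrees with $M_n$ off an event of probability $O_B(n^{-B})$ for every $B$; the mean and variance corrections introduced by truncation are likewise $O_B(n^{-B})$. It therefore suffices to bound the spectral norm of the (shifted, rescaled) truncated matrix, which I will still denote $\tilde M_n$. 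For this I would use $\|\tilde M_n\|^{2k} \le \tr(\tilde M_n^{2k})$, expand the right side as a sum over closed walks of length $2k$ on $[n]$, and use the mean-zero and independence hypotheses to restrict attention to walks in which every edge is traversed at least twice. Standard Wigner combinatorics --- walks organised by their edge-multigraphs, with the dominant contribution coming from ``double trees'' enumerated by the Catalan number $C_k = \frac{1}{k+1}\binom{2k}{k} \le 4^k$ --- then yields
\[
\E \tr(\tilde M_n^{2k}) \le n \cdot (2\sqrt{n})^{2k} (1+o(1))^k
\]
uniformly for $k \le n^{c_0}$ for some small $c_0$; here the $(1+o(1))^k$ factor absorbs the mild deviation of the truncated entries from exact unit variance and the contributions of multigraphs that are not double trees, which are suppressed by a factor involving $n^\eps$.

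Applying Markov's inequality at threshold $3\sqrt{n}$ gives $\P(\|\tilde M_n\| \ge 3\sqrt{n}) \le n (4/9)^k (1+o(1))^k$, and choosing $k := \lceil C A \log n \rceil$ with $C$ sufficiently large produces the bound $O_A(n^{-A})$. Combined with the truncation error, this establishes the first claim. The second claim $\P(|\lambda_i(M_n)| \ge 3\sqrt{n}) = O_A(n^{-A})$ is immediate from $|\lambda_i(M_n)| \le \|M_n\|$. For the moment bound, I would split
\[
\E|\lambda_i(M_n)|^A = \E\bigl[|\lambda_i|^A \mathbf{1}_{\|M_n\|\le 3\sqrt{n}}\bigr] + \E\bigl[|\lambda_i|^A \mathbf{1}_{\|M_n\|>3\sqrt{n}}\bigr];
\]
the first term is trivially $\le (3\sqrt{n})^A$, while for the second I use $|\lambda_i|^A \le (\sum_{i,j}|\zeta_{ij}|^2)^{A/2}$, observe via the exponential tails and Cauchy--Schwarz that $\E (\sum |\zeta_{ij}|^2)^{A} = n^{O(A)}$, and invoke the $O_B(n^{-B})$ tail bound from the first claim with $B$ large enough to absorb this polynomial.

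The main obstacle is not conceptual but quantitative bookkeeping: the trace-method combinatorics must be run carefully enough that the $(1+o(1))^k$ slack remains uniform for $k$ up to $C_A \log n$, which is exactly where the truncation level $n^\eps$ is needed to prevent large entries from inflating the non-tree contributions. All of this is standard (the same estimate is used as an ingredient throughout \cite{TVbulk, TVedge}); alternatively, one could deduce the lemma by combining Talagrand's concentration inequality for the $1$-Lipschitz operator-norm functional on truncated entries with the Bai--Yin almost-sure limit $\|M_n\|/\sqrt n \to 2$, or simply cite the existing literature.
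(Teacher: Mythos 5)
Your proposal is correct, but it takes a genuinely different route from the paper. The paper proves Lemma~\ref{V} by citing the concentration inequalities \eqref{vu} and \eqref{mox} (due to Alon--Krivelevich--Vu and Meckes, obtained via Talagrand's inequality), which localise each $\lambda_i$ around its median to within $O(n^\eps \min(i,n+1-i)^{1/2})$ with polynomially small failure probability; combined with the semicircle-law consequence \eqref{gammai-form}, which pins the medians of $\lambda_1, \lambda_n$ near $\mp 2\sqrt{n}$, this immediately gives $\P(\|M_n\| \ge 3\sqrt n) = O_A(n^{-A})$, and the moment bound follows by the same H\"older/tail-splitting you use. You instead rebuild the tail bound from scratch via truncation and the Füredi--Komlós/Bai--Yin trace method, running the walk combinatorics up to $k \asymp A\log n$. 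Both work; the paper's route is a two-line deduction from results it already invokes (indeed \eqref{vu} and \eqref{mox} are quoted in the Introduction), and also yields the sharper constant $2+o(1)$ in place of $3$, whereas your argument is self-contained and makes the dependence on the exponential-decay hypothesis explicit. You do in fact gesture at the paper's method in your closing sentence (``combining Talagrand's concentration inequality \dots with the Bai--Yin almost-sure limit''), so you were aware of both options; you simply led with the heavier one. One small bookkeeping note: in your estimate $\E \tr(\tilde M_n^{2k}) \le n(2\sqrt n)^{2k}(1+o(1))^k$, the $o(1)$ should really be $O(k^2 n^{-1+O(\eps)})$ or similar, so uniformity for $k \lesssim \log n$ requires only that this is $o(1/k)$ --- easily true, but worth stating, since that is precisely where the truncation level $n^\eps$ earns its keep.
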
 

\begin{proof} This follows easily from \eqref{vu} or \eqref{mox}, combined with \eqref{gammai-form} (as well as using crude estimates, such as H\"older's inequality, to deal with the rare tail event in which the estimates \eqref{vu} or \eqref{mox} fail).  Note that this argument allows us to replace the coefficient $3$ in the above large deviation inequality by $2+o(1)$, but we will not need this improvement here.  For even sharper concentration results, see the recent paper \cite{EYY}.
\end{proof}

We can now invoke Theorem \ref{stronglocal} to establish

\begin{proposition}[Fourth moment concentration]\label{fourthconc} We have
$$ |\sum_{i=1}^n \E(\lambda_i^4) - (\sqrt{n} \gamma_i)^4 - 4 (\sqrt{n} \gamma_i)^3 (\E \lambda_i-\sqrt{n}\gamma_i)| = O(n^{2-c})$$
for some absolute constant $c>0$, and similarly for $\lambda'_i$.
\end{proposition}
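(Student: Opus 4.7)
The plan is to expand $\lambda_i^4$ around the classical location $\sqrt{n}\gamma_i$ and show that the quadratic, cubic, and quartic remainder terms sum to $O(n^{2-c})$.

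Write $X_i := \lambda_i - \sqrt{n}\gamma_i$. Then the binomial expansion gives
\begin{equation*}
\E\lambda_i^4 = (\sqrt{n}\gamma_i)^4 + 4(\sqrt{n}\gamma_i)^3\,\E X_i + 6(\sqrt{n}\gamma_i)^2\,\E X_i^2 + 4(\sqrt{n}\gamma_i)\,\E X_i^3 + \E X_i^4,
\end{equation*}
so the quantity to be estimated equals $\sum_{i=1}^n \bigl(6(\sqrt{n}\gamma_i)^2 \E X_i^2 + 4(\sqrt{n}\gamma_i)\E X_i^3 + \E X_i^4\bigr)$. Since $|\gamma_i|\le 2$, it suffices to prove the three bounds
\begin{equation*}
n\sum_{i=1}^n \E X_i^2 = O(n^{2-c}), \qquad n^{1/2}\sum_{i=1}^n |\E X_i^3| = O(n^{2-c}), \qquad \sum_{i=1}^n \E X_i^4 = O(n^{2-c}).
\end{equation*}

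First I would use Theorem \ref{stronglocal}: summing the bound \eqref{lamin} over all $i$ gives $\sum_i \E X_i^2 = O(n^{1-c})$ (this is exactly the averaged inequality \eqref{logsob}), which immediately handles the quadratic contribution. For the cubic and quartic contributions, I would exploit the fact that $|X_i|$ is crudely bounded by $O(\sqrt{n})$ with very high probability. Specifically, by Lemma \ref{V} the event $\CE := \{\|M_n\| \le 3\sqrt{n}\}$ has probability $1 - O(n^{-A})$ for any $A$, and on $\CE$ one has $|X_i| \le |\lambda_i| + 2\sqrt{n} \le 5\sqrt{n}$. Therefore
\begin{equation*}
\E X_i^4 = \E[X_i^4 \mathbf 1_\CE] + \E[X_i^4 \mathbf 1_{\CE^c}] \le 25n\,\E X_i^2 + \bigl(\E X_i^8\bigr)^{1/2}\Pr(\CE^c)^{1/2},
\end{equation*}
and the last term is $O_A(n^{2-A/2})$ by Lemma \ref{V} applied with a large exponent. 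Taking $A$ large and summing yields $\sum_i \E X_i^4 = O(n)\sum_i \E X_i^2 + O(n^{-10}) = O(n^{2-c})$. The cubic term is handled analogously: by Cauchy--Schwarz,
\begin{equation*}
|\E X_i^3| \le (\E X_i^2)^{1/2}(\E X_i^4)^{1/2} \le O(\sqrt{n})\,\E X_i^2 + O(n^{-10}),
\end{equation*}
and a final factor of $\sqrt{n}$ from $|\sqrt{n}\gamma_i|$ keeps us within the budget $O(n^{2-c})$.

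The computation for $\lambda'_i$ is identical since the GUE matrix $M'_n$ also satisfies Theorem \ref{stronglocal} (via the GUE concentration result Lemma \ref{slick}, which was the starting point of its proof) and Lemma \ref{V}. The only real ingredient is the averaged localization estimate \eqref{logsob}; the remaining manipulations are a Taylor expansion and routine tail truncation. The main thing to watch is simply that no higher moment of $X_i$ is available beyond what one can extract from the $L^2$ bound combined with the deterministic bound $|X_i| = O(\sqrt{n})$ on the bulk event, but this pairing is precisely strong enough to beat $n^2$ by a factor of $n^c$.
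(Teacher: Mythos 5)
Your proof is correct and follows essentially the same route as the paper: Taylor-expand $\lambda_i^4$ around $\sqrt{n}\gamma_i$, keep the linear term explicitly, and bound the remaining terms using the averaged $L^2$ localisation $\sum_i \E X_i^2 = O(n^{1-c})$ from Theorem \ref{stronglocal} together with the crude bound $|\lambda_i| = O(\sqrt{n})$ on the high-probability event from Lemma \ref{V} (the paper compresses your quadratic, cubic, and quartic terms into a single $O(|X_i|^2(|\lambda_i|+\sqrt{n}\gamma_i)^2)$ remainder, but the estimate is the same). The small refinements — the explicit Cauchy--Schwarz for the cubic term and the tail truncation via $\E X_i^8$ — are exactly the "crude estimates such as H\"older's inequality" the paper alludes to.
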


\begin{proof}  We begin with the Taylor expansion
$$\lambda_i^4 = (\sqrt{n} \gamma_i)^4 + 4 (\sqrt{n} \gamma_i)^3 (\lambda_i- \E \lambda_i) + O( |\lambda_i - \sqrt{n} \gamma_i|^2 (|\lambda_i| + \sqrt{n} \gamma_i)^2 ).$$
From Lemma \ref{V}, we see that with probability $1-O(n^{-100})$ (say), we have $|\lambda_i| + \sqrt{n} \gamma_i = O(\sqrt{n})$.  Taking expectations and summing using Theorem \ref{stronglocal} (and using crude estimates, such as H\"older's inequality, to deal with the tail event of probability $O(n^{-100})$) we obtain the claim.
\end{proof}

Combining Proposition \ref{fourthconc} with Lemma \ref{fourth} and the triangle inequality, we conclude (for $n$ large enough) that
$$ |\sum_{i=1}^n 4 \gamma_i^3 (\E \lambda_i - \E \lambda'_i)| \geq |\kappa_0| n^{1/2}.$$
Since $\gamma_i = O(1)$, Theorem \ref{theorem:main} now follows from the triangle inequality.

\begin{remark}  One can use \cite[Theorem 7.1]{EYY} as a substitute for Theorem \ref{stronglocal} in the arguments above.
\end{remark}

\section{Higher moment computations}\label{higher-sec}

In this section we discuss a higher moment computation that lead to Conjecture \ref{conj-asym}.  We will restate this conjecture at the end of this section for the reader's convenience.


\begin{lemma}[Higher moment computations]\label{higher}  $M_n, M'_n, \lambda_i, \lambda'_i$ be as in Theorem \ref{theorem:main}.
Set $\kappa_0 := \E (\eta^4) - \E((\eta')^4)$, and let $k \geq 0$ be an integer.  Then we have
$$ \sum_{i=1}^n \E(\lambda_i^k) - \E((\lambda'_i)^k) = ( 2 D_{(k-2)/2} \kappa_0 + O_k(n^{-1}) ) n^{k/2}$$
where the modified Catalan number $D_m$ is defined to be equal to
\begin{equation}\label{dm}
D_{m} = \binom{2m+2}{m-1} = \frac{(2m+2)!}{(m-1)! (m+3)!}
\end{equation}
when $m=1,2,\ldots$ is a positive integer, and $D_m=0$ otherwise, thus
$$ D_0 = 0; \quad D_1 = 1; \quad D_2 = 6; \quad D_3 = 28; \quad D_4 = 120; \quad \ldots$$
(OEIS sequence A002694\cite{oeis}).  
\end{lemma}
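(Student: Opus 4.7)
The plan is a trace moment computation, generalizing Lemma~\ref{fourth}. Expand
\begin{equation*}
\sum_{i=1}^n \bigl[\E(\lambda_i^k) - \E((\lambda'_i)^k)\bigr] = \sum_{1 \le a_1,\ldots,a_k \le n} \Bigl[\E\,\zeta_{a_1a_2}\cdots\zeta_{a_ka_1} - \E\,\zeta'_{a_1a_2}\cdots\zeta'_{a_ka_1}\Bigr],
\end{equation*}
and organize the terms by the shape of the closed walk $a_1 \to a_2 \to \cdots \to a_k \to a_1$. By independence each summand is a product of per-edge moments. Since $\eta$ and $\eta'$ match to third order, all mixed $\zeta$-moments of total degree $\le 3$ coincide between $M_n$ and $M'_n$ (and vanish, since $\E\eta^3 = 0$); the Wigner normalization $\E\zeta_{aa}^2 = \E(\zeta'_{aa})^2 = 1$ likewise makes the second-order diagonal moments agree. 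Hence a shape contributes to the difference only if it uses some $\zeta$-moment of total degree $\ge 4$, which by the mean-zero property forces at least one undirected edge to be traversed at least four times.

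Set $m := (k-2)/2$. A standard count shows that a walk whose underlying graph on distinct vertices has $v$ vertices and $e$ non-loop edges admits $O(n^v)$ labeled realizations and satisfies $v \le e + 1$, with equality iff the non-loop graph is a tree. For a closed walk on a tree the flow constraint forces each edge to have even multiplicity. Combining these with the constraints ``some multiplicity is $\ge 4$'' and ``the multiplicities sum to $k = 2m+2$'' forces $v \le m+1$, with equality precisely when the underlying graph is a spanning tree on $m+1$ vertices (no self-loops, no cycles) and the multiplicity profile is $(4,2,2,\ldots,2)$: one distinguished edge traversed four times, all other $m-1$ edges traversed twice. All other admissible shapes (self-loops of multiplicity $\ge 4$, non-tree cycles, two distinct edges each of multiplicity $\ge 4$, a single edge of multiplicity $\ge 6$, etc.) satisfy $v \le m$ and therefore contribute at most $O_k(n^m) = O_k(n^{k/2-1})$ to the difference.

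For a leading shape the per-term expectation factorizes: the quadrupled edge $\{a,b\}$ contributes $\E|\zeta_{ab}|^4$ and each doubled edge $\{u,v\}$ contributes $\E|\zeta_{uv}|^2 = 1$. By exactly the calculation in Lemma~\ref{fourth}, $\E|\zeta_{ab}|^4 - \E|\zeta'_{ab}|^4 = 2\kappa_0$. Consequently the full difference from leading shapes equals $2\kappa_0$ times the number $N(n)$ of labeled closed walks of this specific shape, and a standard injective-labeling count produces
\begin{equation*}
N(n) = D_m \cdot n(n-1)(n-2)\cdots(n-m) + O_k(n^m) = D_m\, n^{m+1} + O_k(n^m),
\end{equation*}
where $D_m$ denotes the number of closed walks of length $2m+2$ on an abstract vertex set $\{0,1,\ldots,m\}$ whose support is a spanning tree and whose multiplicity profile is $(4,2,\ldots,2)$ (aggregated over all starting vertices and all choices of the distinguished edge).

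The crucial step, which I expect to be the main obstacle, is the combinatorial identity $D_m = \binom{2m+2}{m-1}$. The plan is to bijectively encode each such walk: the underlying doubled-edge tree walk corresponds, in the standard way, to a Catalan-counted plane tree with $m$ edges ($C_m$ in total); one then specifies the distinguished edge and inserts an extra back-and-forth traversal of that edge at some admissible moment in the DFS walk. Counting the valid insertion positions---equivalently, applying a ballot-style or cycle-lemma argument to the resulting augmented lattice path---should yield the binomial coefficient $\binom{2m+2}{m-1}$, matching the sequence OEIS A002694. The delicate points are ensuring the encoding is bijective (no double-counting across the choice of distinguished edge, insertion position, and plane structure) and correctly handling the positional constraint that the inserted pair of traversals keep the path nonnegative. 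Once the identity is in hand, combining $2\kappa_0 D_m n^{m+1}$ with the $O_k(n^m)$ error from the non-leading shapes yields
\begin{equation*}
\sum_{i=1}^n \bigl[\E(\lambda_i^k) - \E((\lambda'_i)^k)\bigr] = 2 \kappa_0 D_m n^{m+1} + O_k(n^m) = \bigl(2 D_m \kappa_0 + O_k(n^{-1})\bigr) n^{k/2},
\end{equation*}
as claimed.
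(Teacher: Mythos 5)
Your reduction to the combinatorial count is sound and follows essentially the same path as the paper: the trace expansion, the observation that matching third moments kill all multiplicity profiles except those using at least one quadrupled edge, the bound $v \le m+1$ (with $m=(k-2)/2$) with equality only for the tree-with-one-quadrupled-edge shape, and the per-edge factorization giving a contribution of $2\kappa_0$ from the distinguished edge. Your treatment of the factorization through $\E|\zeta_{ab}|^4 - \E|\zeta'_{ab}|^4 = 2\kappa_0$ is in fact a bit more careful than the paper's statement (which describes the summand as $2\E\eta^4$ outright, eliding the $2(\E\eta^2)^2$ term that cancels only upon subtraction).

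The genuine gap is exactly where you flag it: the combinatorial identity $D_m = \binom{2m+2}{m-1}$, where $D_m$ is the number of ``$4$-admissible'' canonical walk shapes, is asserted but not proven. Your sketch --- take a plane tree counted by $C_m$, mark a distinguished edge, insert an extra back-and-forth, and control the insertion position by a ballot-type argument --- is plausible but involves real work: the extra traversal of the distinguished edge can be inserted at several inequivalent points in the DFS walk and can also wrap around a subtree, so the ``insertion position $\times$ distinguished edge'' count is not obviously a clean product, and one must rule out the same $4$-admissible path arising from two different (Catalan tree, edge, position) triples. The paper sidesteps a direct bijection entirely: it establishes the recurrence
$$ D_m = 2 \sum_{i+j=m-1} C_i D_j + \sum_{i+j+k+l=m-1} C_i C_j C_k C_l $$
by splitting a $4$-admissible path at its first edge according to whether that edge has multiplicity two or four, and then checks that the generating function $d(x) = \sum D_m x^m = \frac{(1-\sqrt{1-4x})^4}{16x^3\sqrt{1-4x}}$ satisfies $d = 2xcd + xc^4$ where $c(x)$ is the Catalan generating function. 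If you want to complete your version, you should either carry out the bijection carefully (proving it is well-defined and injective with the stated image), or switch to the paper's recurrence-plus-generating-function route, which requires no bijection and is self-contained.

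One small imprecision worth tightening: you describe $D_m$ as counting walk shapes ``aggregated over all starting vertices and all choices of the distinguished edge.'' In the injective-labeling count $N(n) = D_m\,n(n-1)\cdots(n-m)$, the correct convention is to count shapes in canonical form (vertices labeled by order of first appearance, so the start is always vertex $1$), not to aggregate over starting vertices --- otherwise you would over-count. The paper's definition of ``$4$-admissible'' builds in exactly this canonical-labeling constraint.
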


\begin{proof}  This is a standard moment method computation (which was the method used by Wigner to prove the semi-cirlce law \cite{BS};
this is also  related to the \emph{genus expansion} from string theory).   We have
$$ \sum_{i=1}^n \E(\lambda_i)^k = \sum_{1 \leq a_1,\ldots,a_k \leq n} \E \zeta_{a_1 a_2} \ldots \zeta_{a_k a_1}$$
and similarly for $\sum_{i=1}^n \E(\lambda'_i)^k$.

Consider the $k$ sets $\{ a_1,a_2\}, \{a_2,a_3\}, \ldots, \{a_k,a_1\}$.  If one of these sets appears with multiplicity one, then the expectation vanishes.  If none of the sets appears with multiplicity at least four, then the contributions to $\sum_{i=1}^n \E(\lambda_i)^k - \sum_{i=1}^n \E(\lambda'_i)^k$ cancel each other out.  Thus the only terms that survive are those in which each set appears with multiplicity at least two, and at least one set appears with multiplicity four.  In particular, there are at most $(k-2)/2$ distinct sets $\{a_i,a_{i+1}\}$ (with the convention $a_{k+1}=a_1$), and thus at most $k/2$ distinct values of $a_i$.  

If there are fewer than $k/2$ distinct values of $a_i$, then the total contribution here is easily seen to be $O_k(n^{k/2 - 1})$, which is acceptable.  Thus we may restrict attention to the case when there are exactly $k/2$ distinct values of $a_i$, which forces there to be exactly $(k-2)/2$ distinct sets $\{a_i,a_{i+1}\}$, and furthermore the (connected) graph formed by these edges cannot contain any cycles and is thus a tree.  Finally, each set $\{a_i,a_{i+1}\}$ must appear with multiplicity two, with the exception of one set that appears with multiplicity four.  The summand $\E \zeta_{a_1 a_2} \ldots \zeta_{a_k a_1}$ is then equal to $2\E \eta^4$, and similarly $\E \zeta'_{a_1 a_2} \ldots \zeta'_{a_k a_1}$ is then equal to $2\E (\eta')^4$.

We now assign each $a_i$ a label $j = j(a_i)$ from $1$ to $k/2$ by order of appearance; thus $a_1$ will be assigned a label $j(a_1)$ of $1$, the first $a_i$ that is distinct from $a_1$ will be assigned a label of $j(a_i) = 2$, the first $a_i$ has not already been labeled $1$ or $2$ will be labeled $3$, and so forth.  The closed path $\gamma = (( j(a_1), j(a_2) ), \ldots, (j(a_{k/2}),j(a_1)))$ then traverses a tree $T_\gamma$ of $(k-2)/2$ edges spanning the vertices $\{1,\ldots,k/2\}$, where the path $\gamma$ traverses each edge of $T_\gamma$ with multiplicity two, with the exception of one edge of $T_\gamma$ that is traversed four times.  Furthermore, the path $\gamma$ only encounters a vertex $j$ in $\{1,\ldots,k/2\}$ after it has first encountered $1,\ldots,j-1$; in particular, the starting (and ending) vertex of $\gamma$ is necessarily $1$.

Call a closed path $\gamma = ((j_1,j_2),\ldots,(j_{k/2},j_1))$ of length $k/2$ in $\{1,\ldots,k/2\}$ \emph{$4$-admissible} if it traverses a tree of $(k-2)/2$ edges spanning $\{1,\ldots,k/2\}$, so that each edge is traversed twice with the exception of one edge that is traversed four times, and such that each vertex $j$ is encountered only after encountering $1,\ldots,j-1$.  It is not hard to see that each such $4$-admissible path contributes 
$$( n^{k/2} + O_k( n^{k/2 - 1}) ) \times 2 \E \eta^4$$ 
to $\sum_{i=1}^n \E(\lambda_i)^k$, and similarly contributes
$$( n^{k/2} + O_k( n^{k/2 - 1}) ) \times 2 \E (\eta')^4$$ 
to $\sum_{i=1}^n \E(\lambda'_i)^k$.  Subtracting, we see that each $\gamma$ contributes
$$( n^{k/2} + O_k( n^{k/2 - 1}) ) \times 2 \kappa_0$$
to  $\sum_{i=1}^n \E(\lambda_i)^k - \sum_{i=1}^n \E(\lambda'_i)^k$.  Thus, it will suffice to show that for any $m$, the number of $4$-admissible paths on trees of $m$ edges is equal to $D_m$.

The claim is trivial unless $m$ is a positive integer.  We observe the recurrence
\begin{equation}\label{doom}
D_m = 2 \sum_{i,j \geq 0: i+j=m-1}^{m-1} C_i D_j + \sum_{i,j,k,l \geq 0: i+j+k+l=m-1} C_i C_j C_k C_l
\end{equation}
for $m=1,2,\ldots$, where 
$$ C_m := \frac{(2m)!}{m! (m+1)!}$$
are the Catalan numbers, thus
$$ C_0 = 0; \quad C_1 = 1; \quad C_2 = 2; \quad C_3 = 5; \quad C_4 = 14; \quad \ldots.$$
Indeed, writing
$$ c(x) := \sum_{m=0}^\infty C_m x^m = \frac{1 - \sqrt{1-4x}}{2x}$$
and
\begin{equation}\label{donkey}
d(x) := \sum_{m=0}^\infty D_m x^m = \frac{(1-\sqrt{1-4x})^4}{16 x^3 \sqrt{1-4x}}
\end{equation}
a brief calculation shows that
$$ d(x) = 2xc(x)d(x) + x c(x)^4$$
whence the claim.  

Call a path $((j_1,j_2),\ldots,(j_{2m},j_1))$ \emph{$2$-admissible} if it traverses a tree of $m$ edges spanning $\{1,\ldots,m+1\}$, such that each edge is traversed exactly twice, and each vertex $j$ is encountered only after encountering $1,\ldots,j-1$.  It is a classical fact that the number of $2$-admissible paths is $C_m$.

Now suppose inductively that there are $D_j$ $4$-admissible paths on trees of $j$ edges for all $j<m$.  It suffices to show that the number of $4$-admissible paths on trees of $m$ edges is given by the right-hand side of \eqref{doom}.
To do this, consider the first edge $(j_1,j_2)$ of an admissible path $\gamma$ on a tree with $m$ edges.  This edge is traversed either two or four times.  Suppose first that it is traversed two times.  Then one can split $\gamma$ into the following pieces: the first edge $(j_1,j_2)$, a (relabeled) $2$-admissible or $4$-admissible path on a tree with $i$ edges that starts and ends at $j_2$, a return edge $(j_2,j_1)$, and a (relabeled) $4$-admissible or $2$-admissible path on a tree with $j$ edges that is disjoint from the first tree that starts and ends at $j_1$, where $i,j \geq 0$ add up to $m-1$.  This case gives a net contribution of
$$ \sum_{i,j \geq 0: i+j=m-1} C_i D_j + D_i C_j = 2 \sum_{i,j \geq 0: i+j=m-1} C_i D_j$$
which is the first term of \eqref{doom}.

Now suppose that $(j_1,j_2)$ is traversed four times.  Then we can split $\gamma$ into the following pieces: the first edge $(j_1,j_2)$, a (relabeled) $2$-admissible path on a tree with $i$ edges that starts and ends at $j_2$, a return edge $(j_2,j_1)$, a (relabeled) $2$-admissible path with $j$ edges that starts and ends at $j_1$, a repeated edge $(j_1,j_2)$, a (relabeled) $2$-admissible path with $k$ edges that starts and ends at $j_2$, a repeated return edge $(j_2,j_1)$, and a (relabeled) $2$-admissible path on a tree with $l$ edges that starts and ends at $j_1$, where all trees are disjoint (except at their roots) and $i,j,k,l \geq 0$ add up to $m-1$.  This gives the second contribution to the right-hand side of \eqref{doom}.
\end{proof}

We now repeat the arguments from the previous section.  A routine generalisation of Proposition \ref{fourthconc} yields the bound
$$ |\sum_{i=1}^n \E(\lambda_i^k) - (\sqrt{n} \gamma_i)^k - k (\sqrt{n} \gamma_i)^{k-1} (\E \lambda_i-\sqrt{n} \gamma_i)| = O_k(n^{k-c})$$
for any $k \geq 1$ and some absolute constant $c>0$ (note that the left-hand side vanishes for $k=1$).  We conclude from this and Lemma \ref{higher} that
\begin{equation}\label{sli}
\frac{1}{n} \sum_{i=1}^n k \gamma_i^{k-1} \sqrt{n}(\E \lambda_i - \E \lambda'_i) = 2 D_{(k-2)/2} \kappa_0 + O_k(n^{-c}).
\end{equation}

Next, we observe from \eqref{donkey} that
$$ - \sum_{k=0}^\infty \frac{D_{(k-2)/2}}{z^k} = - \frac{(z-\sqrt{z^2-4})^4}{16 \sqrt{z^2-4}}$$
for $z$ near infinity, where we pick the branch of the square root of $\sqrt{z^2-4}$ that equals $z$ near infinity and is analytic away from the interval $[-2,2]$.  Then the right-hand side continues analytically to the exterior of this interval.  Calling this analytic function $f(z)$, we compute the jump formula
$$ \lim_{b \to 0} \frac{f(x+ib) - f(x-ib)}{2\pi i} = g(x)$$
where $g: \R \to \R$ vanishes outside of the interval $[-2,2]$, and is equal to
$$ g(x) := \frac{1}{2\pi} \frac{x^4-4x^2+2}{\sqrt{4-x^2}}$$
on this interval.  From the Cauchy integral formula we conclude the moment formula
$$ D_{(k-2)/2} = \int_{-2}^2 g(x) x^k\ dx$$
for $k=0,1,2,\ldots$.  The antiderivative of $g(x)$ is
$$ -\frac{1}{8\pi} (x^3-2x) \sqrt{4-x^2} = -\frac{1}{8} (x^3-2x) \rho_{sc}(x)$$
so by an integration by parts we have
$$ D_{(k-2)/2} = \int_{-2}^2 \frac{1}{8} (x^3-2x) k x^{k-1}\ \rho_{sc}(x) dx.$$
By Riemann integration (or more precisely, the trapezoid rule), the right-hand side is equal to
$$ \frac{1}{n} \sum_{i=1}^n \frac{1}{8} (\gamma_i^3 - 2 \gamma_i) k \gamma_i^{k-1} + O_k(n^{-c})$$
for some absolute constant $c>0$.

Thus if we introduce the normalised shift
$$ s_i := \sqrt{n}(\E \lambda_i - \E \lambda'_i) - \frac{1}{4} (\gamma_i^3 - 2 \gamma_i) \kappa_0$$
we can rewrite \eqref{sli} as
\begin{equation}\label{sli-2}
\frac{1}{n} \sum_{i=1}^n k \gamma_i^{k-1} s_i = O_k(n^{-c}).
\end{equation}
This suggests (but does not rigorously prove\footnote{Specifically, the difficulty is that there could be cancellation between nearby values of $s_i$.  If one could show some assertion to the effect that $s_i \approx s_{i'}$ when $i, i'$ are close together, then this would go a long way towards establishing Conjecture \ref{conj-asym} below.}) that the $s_i$ are small, of size $O(n^{-c})$, at least in the bulk region $\delta n \leq i \leq (1-\delta) n$.  In particular, we are led to the conjecture (Conjecture \ref{conj-asym})  that
$$ (\E \lambda_i - \E \lambda'_i) = \frac{1}{4 \sqrt{n}} (\gamma_i^3 - 2 \gamma_i) \kappa_0 + O(n^{-1/2-c})$$
and this in turn suggests the following asymptotic for the expected value of $\lambda_i$:

\begin{conjecture}[Conjectured asymptotic]\label{conj-asym}  For $\delta n \leq i \leq (1-\delta) n$ with $\delta>0$ fixed, one has
$$ \E \lambda_i = n^{1/2} \gamma_i + n^{-1/2} C_{i,n} + \frac{1}{4 \sqrt{n}} (\gamma_i^3 - 2 \gamma_i) \E \eta^4 + O_\delta(n^{-1/2-c} )$$
for some absolute constant $c>0$, where $C_{i,n}$ is some bounded quantity depending only on $i, n$ (and is in particular independent of $\eta$).  The same statement should also be true for the median ${\bf M} \lambda_i$.
\end{conjecture}

The bound on $C_{i,n}$ is plausible in view of results such as Lemma \ref{slick}.  It should in fact be possible to obtain (at least conjecturally) a precise value for $C_{i,n}$ from an analysis of the GUE case.  Such an asymptotic would demonstrate more precisely the dependence of the $i^{th}$ eigenvalue on the fourth moment $\E \eta^4$ at the scale $\Theta(n^{-1/2})$ of the mean eigenvalue spacing.

\end{document}